\documentclass[12pt]{amsart}
\usepackage[colorlinks=true, pdfstartview=FitV, linkcolor=blue, citecolor=blue, urlcolor=blue]{hyperref}
\usepackage{amssymb}
\textwidth=1.35\textwidth
\textheight=1.22\textheight
\calclayout

\def\C{\mathbb {C}}
\def\R{\mathbb {R}}
\def\NN{\mathcal {N}}

\def\H{\mathbb {H}}

\def\inv{^{-1}}

\newcommand{\rank}{\operatorname{rank}}
\newcommand{\SL}{\operatorname{SL}}
\newcommand{\SU}{\operatorname{SU}}

\newcommand{\SO}{\operatorname{SO}}

\def\GL{\operatorname{GL}}

\def\phi{\varphi}

\def\I{\mathcal I}
\def\quot#1#2{#1/\!\!/#2}
\def\Dbar{\leavevmode\lower.6ex\hbox to 0pt{\hskip-.23ex
    \accent"16\hss}D}
\def\apr{{\operatorname{apr}}}
\def\pr{{\operatorname{pr}}}
\def\codim{{\operatorname{codim}}}



\numberwithin{equation}{subsection}

\newtheorem{theorem}[subsection]{Theorem}
\newtheorem{lemma}[subsection]{Lemma}
\newtheorem{proposition}[subsection]{Proposition}
\newtheorem{corollary}[subsection]{Corollary}

\theoremstyle{definition}

\theoremstyle{remark}

\newtheorem{remark}[subsection]{Remark}

\newtheorem{example}[subsection]{Example}

\title[Reduced invariant sets]{\boldmath Reduced invariant sets} 
 \author{Gerald W. Schwarz}
\address{Department of Mathematics\\
Brandeis University\\
Waltham, MA 02454-9110}
\email{schwarz@brandeis.edu}
\subjclass[2000]{20G20, 57S15}
\keywords{Invariant polynomials, reduced, saturated}
\dedicatory{In honor of Dick Palais}

\begin{document}
\begin{abstract}
Let $K$ be a compact Lie group and $W$ a finite-dimensional real $K$-module. Let $X$ be a $K$-stable real algebraic subset of $W$. Let   $\I(X)$ denote the ideal of $X$ in $\R[W]$ and let  $\I_K(X)$  be the ideal generated by $\I(X)^K$. We find necessary conditions and sufficient conditions for $\I(X)=\I_K(X)$ and for $\sqrt{\I_K(X)}=\I(X)$. We consider  analogous questions for actions of  complex reductive groups.
 \end{abstract}

\maketitle
 
\section{Introduction}\label{sec:intro}

Let $K$ be a compact Lie group, let  $W$ be a finite-dimensional real $K$-module and let $X\subset W$ be  $K$-invariant and real algebraic (the zero set of real polynomial functions on $W$). Let $\I(X)$ denote the ideal of $X$ in $\R[W]$. Let $\R[W]^K$ denote the $K$-invariants in $\R[W]$ and let  $\I_K(X)$  be the ideal generated by $\I(X)^K:=\I(X)\cap\R[W]^K$.  We say that $X$ is \emph{$K$-reduced\/} if $\I_K(X)=\I(X)$ and \emph{almost $K$-reduced\/} if $\sqrt{\I_K(X)} = \I(X)$.  Let $Kw$ be an orbit in $W$. Then the \emph{slice representation at $w$\/} is the action of the isotropy group $K_w$ on $N_w$, where $N_w$ is a $K_w$-complement to $T_w(Kw)$ in $W\simeq T_w(W)$.
An orbit $Kw\subset W$ is \emph{principal} (resp.\ \emph{almost principal}) if the  image of $K_w$ in $\GL(N_w)$ is trivial (resp.\ finite). We denote the principal (resp.\ almost principal) points of $W$ by $W_\pr$ (resp.\ $W_\apr$) and we set $X_\pr:=W_\pr\cap X$ and $X_\apr:=W_\apr\cap X$. The \emph{strata\/} of $W$ are the collections of points $S\subset W$ whose isotropy groups are conjugate. There are finitely many strata.  If $\R[W]$ is a free $R[W]^K$-module, then we say that $W$ is \emph{cofree}.  In the following, when we talk about one set being dense in another, we are referring to the Zariski topology.

Here are  our main results:

\begin{theorem}\label{thm:mainreal1}
 If $X$ is  $K$-reduced (resp.\ almost $K$-reduced), then $X_\pr$ (resp.\ $X_\apr$) is  dense in $X$, and conversely if $W$ is cofree.
\end{theorem}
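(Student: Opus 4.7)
The plan is to treat the forward and converse directions separately, running the $K$-reduced and almost-$K$-reduced cases in parallel.

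For the forward direction I argue by contrapositive. Suppose $X_\pr$ (resp.\ $X_\apr$) is not dense in $X$. Since there are finitely many strata, some irreducible component $X_0\subset X$ lies in the closure of a single non-principal (resp.\ non-almost-principal) stratum $S$, with $X_0\cap S$ dense in $X_0$. At a generic $w\in X_0\cap S$, put $H=K_w$; by Luna's slice theorem, a $K$-saturated neighborhood of $Kw$ is $K$-equivariantly modeled by $K\times_H N_w$, and the local comparison of $\I(X)$ with $\I_K(X)$ reduces to that of $\I(X')$ with $\I_H(X')$ for the induced $H$-invariant $X'\subset N_w$ through $0$, with $H$ acting non-trivially (resp.\ with positive-dimensional image) on $N_w$. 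Since $X_0'\subset N_w^H$ locally at $0$, I pick a linear form $\ell\in N_w^*$ lying in a non-trivial $H$-isotypic summand. Then $\ell\in\I(X')$ locally, but the differential of $\ell$ at $0$ is not $H$-invariant, whereas every element of $\I_H(X')$ has $H$-invariant differential at $0$; hence $\ell\notin\I_H(X')$, violating local $H$-reducedness. For the almost case, the positive-dimensional image of $H$ in $\GL(N_w)$ forces the complex null cone $\NN_\C\subset N_w\otimes\C$ of the $H$-action to be positive-dimensional; choosing $\ell$ in a non-trivial $H$-rep not vanishing on $\NN_\C$ yields $\ell\in\I(X')$ with $\ell^n\notin\I_H(X')$ for all $n$.

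For the converse direction, assume $W$ is cofree, so $\R[W]$ is a free $\R[W]^K$-module. Fix a $K$-stable basis of harmonics $h_1,\dots,h_N$ (a $K$-stable complement of the augmentation ideal times $\R[W]$). Any $f\in\I(X)$ expands uniquely as $f=\sum_i f_i h_i$ with $f_i\in\R[W]^K$, and I aim to show $f_i\in\I(X)^K$. Since $X$ is $K$-invariant, $g\cdot f\in\I(X)$ for all $g\in K$, yielding $\sum_i f_i(w)\,h_i(g^{-1}w)=0$ for every $w\in X$ and $g\in K$. The key claim is that at a principal $w\in X_\pr$, the restrictions $h_1|_{Kw},\dots,h_N|_{Kw}$ are $\R$-linearly independent as functions on the orbit $Kw$, forcing $f_i(w)=0$ for every $i$. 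Density of $X_\pr$ in $X$ then gives $f_i|_X=0$, hence $f_i\in\I(X)^K$ and $f\in\I_K(X)$. The almost case proceeds identically at almost-principal points: linear-independence holds modulo the finite image of the isotropy, and averaging over that finite image produces $f^m\in\I_K(X)$ for suitable $m$, i.e.\ $f\in\sqrt{\I_K(X)}$.

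The main technical obstacle is the linear-independence claim for harmonics on a principal orbit, which is equivalent to the scheme-theoretic fiber of $\pi\colon W\to W/\!\!/K$ over a principal image point being the reduced orbit $Kw$. This follows in the cofree case from flatness of $\pi$ combined with the slice theorem at a principal $w$, where $K_w$ acts trivially on $N_w$ and the local model splits as $K/K_w\times N_w$ with $\pi$ the second projection, giving a generically trivial fibration with reduced fiber. Secondary technicalities include extending the local linear form $\ell$ in the forward direction to a global polynomial witnessing failure of $K$-reducedness (handled by localization at a sufficiently generic $w\in X_0$ chosen to avoid the intersections with other components), and, in the almost converse, the passage from linear independence modulo finite isotropy to the radical equality $\sqrt{\I_K(X)}=\I(X)$.
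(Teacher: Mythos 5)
Your route is genuinely different from the paper's: the paper proves everything by complexifying first (Proposition \ref{prop:complexify} turns $K$-reducedness into $G$-reducedness of $Y=X_\C$ and almost $K$-reducedness into $G$-saturation of $Y$), then gets the forward direction from Zariski density of the complexified orbit in the fiber $G\times^{G_x}\NN(N_x)$, and the converse from the referee's flatness lemma (Lemma \ref{lem:referee}, via Corollaries \ref{cor:cofreealmost} and \ref{cor:cofreereduced}). Your converse in the $K$-reduced case --- expanding $f=\sum_i f_ih_i$ over a basis of harmonics and showing $f_i(w)=0$ at principal $w$ by linear independence of the $h_i$ on the orbit --- is an attractive, essentially correct real-variable substitute for Lemma \ref{lem:referee}; the linear-independence claim is exactly the statement that the scheme-theoretic fiber over a principal point is the reduced orbit, which is true, though note the basis is infinite when $K$ has positive dimension and the justification really needs the algebraic (\'etale) slice theorem rather than the classical tubular-neighborhood statement.

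There are, however, two genuine gaps. First, in the forward direction the sentence ``the local comparison of $\I(X)$ with $\I_K(X)$ reduces to that of $\I(X')$ with $\I_H(X')$'' is doing all the work and is not justified: $K$-reducedness is a statement about polynomial ideals in $\R[W]$, while the compact-group slice theorem is a smooth/analytic local model, and membership in $\I_K(X)$ or in $\sqrt{\I_K(X)}$ does not obviously localize to the slice. For the plain $K$-reduced case this can be repaired without the slice reduction (every $f\in\I_K(X)$ has $df(w)|_{N_w}\in (N_w^*)^{K_w}$ since the $u_i\in\I(X)^K$ vanish at $w$, and one must then exhibit $f\in\I(X)$ whose differential at a generic $w$ of the stratum is not $K_w$-invariant --- this needs control of the Zariski tangent space of $X$ at $w$, which you don't address), but for the almost case you need a witness $\ell$ with $\ell^n\notin\I_K(X)$ for all $n$, and your own argument already complexifies (it invokes $\NN_\C$); at that point you are reproving Proposition \ref{prop:complexify}(1),(4) with extra steps. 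Second, and more seriously, the almost-$K$-reduced converse does not ``proceed identically'': at an almost-principal point the scheme fiber is $G\times^{G_w}\NN(N_w)$ with $\NN(N_w)$ a fat point, so the harmonics are \emph{not} linearly independent on the orbit and the mechanism $f_i(w)=0$ fails; the phrase ``averaging over that finite image produces $f^m\in\I_K(X)$ for suitable $m$'' is not an argument. This is precisely where the real Nullstellensatz issue bites (over $\R$, $\sqrt{\mathfrak a}$ is not the ideal of the real zero set of $\mathfrak a$), and it is the reason the paper routes the almost case through $G$-saturation of $Y$ and the set-theoretic part of Lemma \ref{lem:referee}. As it stands, your proof is convincing only for the $K$-reduced converse; both ``almost'' directions need to be redone, most naturally by complexifying as the paper does.
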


\begin{theorem}\label{thm:mainreal2}
Let $w\in W$. Then the orbit $Kw$ is $K$-reduced (resp.\ almost $K$-reduced) if and only if $Kw$ is principal (resp.\ almost principal).
\end{theorem}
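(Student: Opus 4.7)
The plan is to derive the forward implications from Theorem~\ref{thm:mainreal1} and to prove the reverse implications by a slice-theoretic reduction to the single point $\{0\}$ in the slice representation $N_w$ at $w$. Throughout I exploit the fact that the isotropy groups of all points of $Kw$ are mutually conjugate, so any $K$-invariant subset of $W$ defined by an isotropy-type condition is either everywhere on $Kw$ or nowhere.

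For the forward implications I apply the necessary part of Theorem~\ref{thm:mainreal1} with $X=Kw$: if $Kw$ is $K$-reduced (resp.\ almost $K$-reduced), then $(Kw)_\pr$ (resp.\ $(Kw)_\apr$) is Zariski dense in $Kw$. Since this subset is either all of $Kw$ or empty, and $Kw$ is non-empty, density forces it to be $Kw$, which is exactly the condition that $Kw$ is principal (resp.\ almost principal).

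For the reverse implications, assume that $Kw$ is principal (resp.\ almost principal). My plan is to use a Luna-type slice theorem to identify a neighborhood of $Kw$ in $W$ with a neighborhood of the zero section in the associated bundle $K\times_{K_w}N_w$, in such a way that $\I(Kw)$ corresponds to the ideal of the zero section and $\I_K(Kw)$ to the ideal generated by $K_w$-invariants on $N_w$ vanishing at $0$. This reduces the problem to showing that $\{0\}\subset N_w$ is $K_w$-reduced (resp.\ almost $K_w$-reduced) for the slice action. In the principal case $K_w$ acts trivially on $N_w$, so every linear coordinate is itself a $K_w$-invariant vanishing at $0$ and $\I_{K_w}(\{0\})=\mathfrak{m}_0$ directly. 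In the almost principal case $K_w$ acts through a finite group, so by Noether's finiteness theorem $\R[N_w]$ is integral over $\R[N_w]^{K_w}$, whence $\R[N_w]/\I_{K_w}(\{0\})$ is a finite-dimensional $\R$-algebra; its complex points form the null cone of a finite-group action on $N_w\otimes\C$ and therefore reduce to $\{0\}$, giving $\sqrt{\I_{K_w}(\{0\})}=\mathfrak{m}_0$.

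The main obstacle is to rigorize the slice reduction in the real algebraic category, where the differentiable slice theorem produces only a $K$-equivariant diffeomorphism rather than an algebraic isomorphism. I would handle this by complexifying to $G=K_\C$ acting on $W_\C$: the stabilizer $G_w$ equals the complexification $(K_w)_\C$ and is therefore reductive, so the complex orbit $Gw$ is closed; and $Kw\subset Gw$ is a totally real submanifold of maximal real dimension, hence Zariski dense in $Gw$. This identifies $\I(Kw)\otimes_\R\C$ with $\I(Gw)$ and $\I_K(Kw)\otimes_\R\C$ with the ideal generated by the $G$-invariants vanishing at $w$, so the needed equality of ideals in $\R[W]$ is equivalent to the analogous equality in $\C[W_\C]$. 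Luna's algebraic \'etale slice theorem on $W_\C$ then supplies an honest algebraic identification with the complex associated bundle, and the question reduces to the slice computations above.
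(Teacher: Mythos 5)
Your proposal is correct and follows essentially the same route as the paper: there, Theorem~\ref{thm:mainreal2} is deduced (as Corollary~\ref{cor:orbit}) from Proposition~\ref{prop:complexify}, which transfers ($K$-reducedness of $Kw$) $\leftrightarrow$ ($G$-reducedness of the closed orbit $Gw$) via complexification, together with Proposition~\ref{prop:Greduced}(1)--(2), where Luna's slice theorem identifies the fiber of $\pi$ through $Gw$ with $G\times^{G_w}\NN(N_w\otimes_\R\C)$ --- a point set-theoretically iff $Kw$ is almost principal and scheme-theoretically iff it is principal, which is precisely your slice computation (your use of Theorem~\ref{thm:mainreal1} for the forward direction is a harmless, non-circular variant). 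One small repair: ``$G_w$ reductive, hence $Gw$ closed'' is not a valid implication in general (the open $\C^*$-orbit in $\C$ has trivial, hence reductive, stabilizer but is not closed); the closedness of $Gw$ for a real point $w\in W$ is Luna's theorem \cite{LunaClosed}, which the paper invokes for exactly this purpose.
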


To prove the results above and to obtain further results we need to complexify. Let $V=W\otimes_\R\C$ and $G=K_\C$ be the complexifications of $W$ and $K$. We have the quotient morphism
$\pi\colon V\to\quot VG$ where $\pi$ is surjective, $\quot VG$ is an affine variety and $\pi^*\C[\quot  VG]=\C[V]^G$. We have the Luna strata of the quotient $\quot VG$  whose inverse images in  $V$ are the strata of $V$. The strata of $V$ are in 1-1 correspondence with those of $W$ \cite[\S 5]{SchLifting}. 
Let $Y=X_\C$ be the complexification of $X$ (the Zariski closure of $X$ in $V$). We say that $Y$ is \emph{$G$-saturated\/} if $Y=\pi\inv(\pi(Y))$ and that \emph{$Y$ is $G$-reduced\/} if the ideal $\I(Y)$ of $Y$ is generated by $\I(Y)^G$. We can define $Y_\apr$ and $Y_\pr$ as above (see \S \ref{sec:stable}). If $f_1,\dots,f_k$ are functions on a complex variety, let $\I(f_1,\dots,f_k)$ denote the ideal they generate.

\begin{theorem}\label{thm:complexif}
\begin{enumerate}
\item $X$ is almost $K$-reduced if and only if $Y$ is $G$-saturated.
\item $X$ is $K$-reduced if and only if $Y$ is $G$-reduced.
\item $X_\apr$ (resp.\ $X_\pr$) is dense in $X$ if and only if $Y_\apr$ (resp.\ $Y_\pr)$ is dense in $Y$.
\end{enumerate}
\end{theorem}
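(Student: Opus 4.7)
The plan is to push everything through the faithfully flat extension $\R[W]\hookrightarrow\C[V]$ and compare ideals and invariants on the two sides. My first step would be to establish the dictionary
\[
\I(Y)=\I(X)\otimes_\R\C,\qquad \I(Y)^G=\I(X)^K\otimes_\R\C,\qquad \I_G(Y)=\I_K(X)\otimes_\R\C.
\]
The first comes from writing a complex polynomial $f=f_1+if_2$ with $f_j\in\R[W]$ and noting that $X$ is Zariski dense in $Y=X_\C$. The second combines exactness of Reynolds averaging on the compact group $K$ with the fact that $K$-invariants and $G$-invariants coincide on complex $G$-modules (since $K$ is Zariski dense in $G$). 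The third is then immediate.

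Given the dictionary, part (2) drops out of faithful flatness of $\R\to\C$: $\I(X)=\I_K(X)$ iff the complexifications agree, iff $\I(Y)=\I_G(Y)$. For part (1), I would first note that since $Y$ is closed and $G$-stable, $\pi(Y)$ is closed in $\quot VG$ with vanishing ideal $\I(Y)^G$, so $\pi\inv(\pi(Y))$ has ideal $\sqrt{\I_G(Y)}$; thus $Y$ is $G$-saturated iff $\I(Y)=\sqrt{\I_G(Y)}$. Transferring this to $\R$ requires $\sqrt{\,\cdot\,}$ to commute with $\otimes_\R\C$, i.e.\ $\sqrt{J}\otimes_\R\C=\sqrt{J\otimes_\R\C}$ for ideals $J\subseteq\R[W]$. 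This holds because $\R\to\C$ is separable, so $\R[W]/\sqrt{J}$ reduced implies $(\R[W]/\sqrt{J})\otimes_\R\C$ reduced, forcing $\sqrt{J}\otimes_\R\C$ to be already radical. Applied to $J=\I_K(X)$, combined with the dictionary and faithful flatness, this yields $\sqrt{\I_K(X)}=\I(X)$ iff $\sqrt{\I_G(Y)}=\I(Y)$, giving (1).

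For (3) I would reduce to irreducible components: since $W_\pr$ is Zariski open in $W$, $X_\pr$ is dense in $X$ iff every real irreducible component $X_i$ meets $W_\pr$, and similarly for $Y$. The stratum correspondence \cite[\S 5]{SchLifting} gives $V_\pr\cap W=W_\pr$ and $V_\apr\cap W=W_\apr$. The forward direction is immediate from $X_\pr\subseteq Y_\pr$ together with $\overline{X_\pr}^V=(\overline{X_\pr}^W)_\C$. For the converse, assume $X_i\cap W_\pr=\emptyset$ for some $i$; then $X_i\subseteq W\setminus W_\pr\subseteq V\setminus V_\pr$, and since $V\setminus V_\pr$ is closed and $X_i$ is Zariski dense in $(X_i)_\C$, we get $(X_i)_\C\subseteq V\setminus V_\pr$. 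Because $X_i\not\subseteq\bigcup_{j\neq i}X_j$, intersecting with $W$ shows $(X_i)_\C\not\subseteq\bigcup_{j\neq i}(X_j)_\C$, so $(X_i)_\C$ contains an irreducible component $Z$ of $Y=\bigcup_i(X_i)_\C$ with $Z\subseteq V\setminus V_\pr$; hence $Y_\pr$ is not dense in $Y$. The apr case is identical.

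The main obstacles I expect are the two commutation facts: that $\sqrt{\,\cdot\,}$ commutes with $\otimes_\R\C$ (used for (1)), and the compatibility between real and complex irreducible decompositions together with $V_\pr\cap W=W_\pr$ (used for (3)). Once these are in hand, faithful flatness handles the remaining bookkeeping.
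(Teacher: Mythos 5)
Your proposal is correct and follows essentially the same route as the paper: identify $\I(Y)=\I(X)\otimes_\R\C$ and $\I_G(Y)=\I_K(X)\otimes_\R\C$, transfer (non-)reducedness via faithful flatness and the compatibility of radicals with the separable base change $\R\to\C$, and handle (3) through the correspondence of strata and irreducible components under complexification. You supply a few details the paper leaves implicit (the characterization of $G$-saturation as $\I(Y)=\sqrt{\I_G(Y)}$ and the radical/base-change commutation), but the argument is the same.
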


\begin{theorem} \label{thm:complex} Assume that $\quot YG\subset \quot VG$ is the zero set of $f_1,\dots,f_k$.  
\begin{enumerate}
\item  Suppose that $Y_\apr$ is dense in $Y$ and that for any stratum $S$ of $V$ which intersects  $Y\setminus Y_\apr$ the codimension of $S$ in $V$ is at least $k+1$. Then $Y$ is $G$-saturated..
\item Suppose that $Y_\pr$ is dense in $Y$ and that $Y$ is $G$-saturated. In addition, suppose that $\I(\pi(Y))=\I(f_1,\dots,f_k)$ where $Y$ has codimension $k$ in $V$. Then $Y$ is $G$-reduced.
\end{enumerate}
\end{theorem}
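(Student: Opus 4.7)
The plan is to introduce $J := \I(\pi^* f_1, \dots, \pi^* f_k) \subset \C[V]$ and study its zero set $Y'$ in $V$. The hypothesis $\quot YG = \{f_1 = \dots = f_k = 0\}$ gives $Y' = \pi\inv(\pi(Y)) \supset Y$. Statement (1) is equivalent to $Y' = Y$, and statement (2) to $J$ being radical. Both parts exploit the behavior of $\pi$ on the open loci $V_\apr$ and $V_\pr$, combined with codimension estimates from Krull's Hauptidealsatz.

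For (1), I suppose some irreducible component $Z$ of $Y'$ is not contained in $Y$ and derive a contradiction. A standard GIT fact --- fibers of $\pi$ over $\pi(V_\apr)$ are single closed orbits --- gives $Y' \cap V_\apr = Y \cap V_\apr$. Combined with irreducibility of $Z$ and density of $Y_\apr$ in $Y$, this forces $Z \cap V_\apr = \emptyset$, hence $Z \subset V \setminus V_\apr$. Since $V \setminus V_\apr$ is a finite union of strata, irreducibility places $Z$ inside the closure $\overline{S_0}$ of a single stratum $S_0$. Using saturation of strata and the equality $\pi(Y) \cap \pi(V \setminus V_\apr) = \pi(Y \setminus Y_\apr)$, I argue that $S_0$ must meet $Y \setminus Y_\apr$, so by hypothesis $\codim S_0 \geq k+1$, whence $\codim Z \geq k+1$. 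But $Y'$ is cut out by $k$ equations in $V$, so Krull's Hauptidealsatz gives $\codim Z \leq k$ --- contradiction.

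For (2), $G$-saturation together with $\I(\pi(Y)) = \I(f_1, \dots, f_k)$ give $\sqrt{J} = \I(Y)$, so it remains to show $J$ is radical. Because the zero set $Y$ of $J$ has codimension exactly $k$ in $V$ and $J$ is generated by $k$ elements, the sequence $\pi^* f_1, \dots, \pi^* f_k$ is regular in the Cohen--Macaulay ring $\C[V]$; hence $\C[V]/J$ is Cohen--Macaulay and satisfies Serre's condition $S_1$. For generic reducedness, I observe that on $V_\pr$ the slice theorem makes $\pi$ a smooth morphism, and that $(f_1, \dots, f_k)$ is radical on $\quot VG$ as the ideal of the variety $\pi(Y)$; the pullback of a radical ideal along a smooth morphism remains radical, so $\C[V]/J$ is reduced at every point of $Y_\pr$, and density of $Y_\pr$ in $Y$ upgrades this to condition $R_0$. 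Serre's criterion $R_0 + S_1 \Rightarrow$ reduced then completes the proof, and $J$ is manifestly generated by invariants.

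The delicate step is in Part (1): after showing $Z \subset V \setminus V_\apr$, I need the generic stratum $S_0$ of $Z$ to be among those intersecting $Y \setminus Y_\apr$, since the codimension hypothesis is not given for arbitrary strata disjoint from $V_\apr$. This is where I use the disjointness of Luna strata in $\quot VG$: a generic point $z \in Z$ satisfies $\pi(z) \in \pi(Y \setminus Y_\apr)$, and $\pi(z)$ lies in a unique Luna stratum, which forces the stratum of $z$ (namely $S_0$) to equal the stratum of some $y \in Y \setminus Y_\apr$, hence to meet $Y \setminus Y_\apr$.
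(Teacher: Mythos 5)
Your proof is correct and follows essentially the same route as the paper's (Theorems \ref{thm:codimd} and \ref{thm:reduced}): part (1) is the same Krull--Hauptidealsatz comparison of $\pi\inv(\pi(Y))$ with $Y$, using that fibers over almost principal points are single closed orbits, merely organized as a contradiction on irreducible components rather than a direct density argument. Part (2) is again Serre's criterion as in Proposition \ref{prop:Serre}; your $S_1$ step just makes the Cohen--Macaulay/complete-intersection argument explicit, and your $R_0$ step uses smoothness of $\pi$ over the principal stratum in place of the Jacobian rank condition, which rests on the same slice-theorem fact.
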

  
\begin{corollary}\label{cor:main}
 If (1) above holds, then $X$ is almost $K$-reduced.
 If (2) holds, then $X$ is $K$-reduced.
\end{corollary}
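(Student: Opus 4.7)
The plan is to observe that Corollary \ref{cor:main} is a direct consequence of Theorem \ref{thm:complex} together with the complexification dictionary provided by Theorem \ref{thm:complexif}; no new ideas are needed beyond invoking these two results in sequence.

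For the first assertion, I would argue as follows. Assume the hypothesis of part (1) of Theorem \ref{thm:complex}: that $Y_\apr$ is dense in $Y$ and that every stratum $S$ of $V$ meeting $Y\setminus Y_\apr$ has codimension at least $k+1$ in $V$. By Theorem \ref{thm:complex}(1) this forces $Y$ to be $G$-saturated. Now Theorem \ref{thm:complexif}(1) says precisely that $Y$ being $G$-saturated is equivalent to $X$ being almost $K$-reduced, so the conclusion follows.

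For the second assertion, I would similarly combine Theorem \ref{thm:complex}(2) with Theorem \ref{thm:complexif}(2). Under the stated hypotheses---that $Y_\pr$ is dense in $Y$, that $Y$ is $G$-saturated, and that $\I(\pi(Y))=\I(f_1,\dots,f_k)$ where $Y$ has codimension $k$ in $V$---Theorem \ref{thm:complex}(2) gives that $Y$ is $G$-reduced. Then the equivalence in Theorem \ref{thm:complexif}(2) between $Y$ being $G$-reduced and $X$ being $K$-reduced yields the desired conclusion.

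There is no real obstacle here: the corollary is purely a bookkeeping statement that packages the real version of the earlier complex-analytic results. All the substantive work is absorbed into Theorems \ref{thm:complex} and \ref{thm:complexif}. If anything had to be checked, it would be only that the hypotheses of Theorem \ref{thm:complex} are phrased on the complex side (involving $Y$, strata of $V$, and ideals in $\C[\quot VG]$) while the conclusion of the corollary is phrased on the real side (involving $X$ and $K$), and the bridge between the two is supplied verbatim by Theorem \ref{thm:complexif}.
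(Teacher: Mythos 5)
Your proposal is correct and is exactly the intended argument: the corollary is immediate from Theorem \ref{thm:complex} (giving $G$-saturation, resp.\ $G$-reducedness, of $Y$) combined with the equivalences in Theorem \ref{thm:complexif}(1) and (2). The paper offers no separate proof precisely because this two-step translation is all there is to check.
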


In sections \ref{sec:complex}--\ref{sec:unstable} we consider when a general $G$-invariant $Y\subset V$ is $G$-saturated or $G$-reduced and we establish Theorem \ref{thm:complex}. In section \ref{sec:real} we treat the real case by complexifying. At the end of section \ref{sec:real}     we establish Theorems  \ref{thm:mainreal1}, \ref{thm:mainreal2}
 and \ref{thm:complexif}.
  
  D.\ \v Z.\ \Dbar okovi\'c posed the question of identifying the $X$ which are $K$-reduced. Our results give a partial answer. We thank  M. Ra{\"\i}s for transmitting the question to us. We thank the referee for a careful reading of the manuscript, helpful suggestions and Lemma \ref{lem:referee}.

\section{The complex case} \label{sec:complex}  Let $G$ be a complex reductive group and $Y$ an affine algebraic set with an algebraic $G$-action. Dual to the inclusion $\C[Y]^G\subset \C[Y]$  we have the quotient morphism $\pi_Y\colon Y\to\quot YG$. 
Let $V$ be a finite-dimensional $G$-module and let $Y$ be a $G$-stable algebraic subset of $V$ (the zero set of an ideal of $\C[V]$).
We shall denote $\pi_V$ simply by $\pi$. Then $\pi_Y=\pi|_Y$ and  $\pi(Y)\simeq\quot YG$ is an algebraic subset of $\quot VG$. We say that $Y$ is \emph{$G$-saturated\/} if $Y=\pi\inv(\pi(Y))$. Let $\I(Y)$ denote the ideal of $Y$ in $\C[V]$ and let $\I_G(Y)$ denote the ideal generated by $\I(Y)^G$. We say that $Y$ is \emph{$G$-reduced} if $\I(Y)=\I_G(Y)$. The \emph{null cone $\NN(V)$ of $V$\/}  is the   fiber $\pi\inv(\pi(0))$. Then $\NN(V)$ is (scheme theoretically) defined by the ideal $\I_G(\{0\})$  so that the scheme $\NN(V)$ is reduced if and only if the set $\NN(V)$ is $G$-reduced, in which case we say that   $V$ is \emph{coreduced}. See \cite{KSreduced} for more on coreduced representations.

	The points of $\quot VG$ are in one-to-one correspondence with the closed $G$-orbits in $V$. The \emph{Luna strata\/} of $\quot VG$ are the sets of closed orbits whose isotropy groups are  all $G$-conjugate. There are finitely many strata in $\quot VG$, and we consider their inverse images in $V$ to be the  \emph{strata\/} of $V$.  Let $v\in V$ such that $Gv$ is closed. Then the isotropy group $G_v$ is reductive, and there is a $G_v$-stable complement $N_v$ to $T_v(Gv)$ in $V\simeq T_v(V)$. We call the action of $G_v$ on $N_v$ the \emph{slice representation at $v$\/}.   
	
	We start with some examples.	

\begin{example}\label{ex:sln}
Let $(V,G)=(k\C^n,\SL_n)$, $k\geq n$. The invariants are generated by the determinants $\det_{i_1,\dots,i_n}$ where  the indices $1\leq i_1<\dots< i_n\leq k$ tell us which $n$ copies of $\C^n$ to take. Then $V$ is coreduced since $\NN(V)$ is the determinantal variety of ($k\times n$)-matrices of rank at most $n-1$. See also \cite{KSreduced}. All orbits outside the null cone are closed with trivial isotropy group, hence are principal.
\end{example}

\begin{example}
Let $G\subset\GL(V)$ be finite and nontrivial.  Then $\NN(V)$ is the origin which is $G$-saturated but   not $G$-reduced.
\end{example}

Part (2) of the proposition below follows from Serre's criterion for reducedness  \cite[Ch.\ 7]{Matsumura}. Part (1) also follows, using the Jacobian criterion for smoothness.
\begin{proposition}\label{prop:Serre} Let $Y\subset V$ be a $G$-saturated  algebraic set.
\begin{enumerate}
\item If $Y$ is $G$-reduced, then for every irreducible component $Y_k$ of $Y$ there is a  point  of $Y_k$ where $\rank f =\codim\ Y_k$. Here $f=(f_1,\dots, f_d)\colon V\to\C^d$ and the $f_i$ generate $\I_G(Y)$.
\item If $\I_G(Y)=\I(f_1,\dots,f_d)$ where the $f_i\in\C[V]^G$ and $Y$ has codimension $d$, then $Y$ is $G$-reduced if and only if the rank condition of (1) is satisfied.
\end{enumerate}
\end{proposition}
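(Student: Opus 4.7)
The plan is to follow the two hints indicated in the statement: part (1) uses the Jacobian criterion for smoothness applied to $Y$ itself, while part (2) combines that with Serre's $R_0+S_1$ criterion for reducedness of a Noetherian scheme.

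For (1), I would start from the identity $\I(Y)=\I_G(Y)=\I(f_1,\dots,f_d)$, which holds precisely because $Y$ is $G$-reduced. In particular the $f_i$ cut out $Y$ scheme-theoretically, not merely set-theoretically. Since $Y$ is then a reduced $\C$-scheme of finite type, generic smoothness in characteristic zero tells us that the smooth locus of $Y$ is open and dense in each irreducible component. Picking a smooth point $y\in Y_k$ lying on no other component, the Jacobian criterion for smoothness then yields that the rank of $df_y$ equals $\codim_V Y_k$, producing the required point.

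For (2), I would consider the closed subscheme $Z\subset V$ defined by $\I(f_1,\dots,f_d)=\I_G(Y)$. Since $Y$ is $G$-saturated we have $V(\I_G(Y))=Y$ set-theoretically, so $Z_{\mathrm{red}}=Y$ has pure codimension $d$ by hypothesis. Krull's Hauptidealsatz forces every irreducible component of $Z$ to have codimension at most $d$, hence equal to $d$; equivalently, $f_1,\dots,f_d$ is a regular sequence in the Cohen--Macaulay ring $\C[V]$, so $Z$ is a scheme-theoretic complete intersection, hence Cohen--Macaulay, and Serre's condition $S_1$ holds for $Z$ automatically. By Serre's criterion, $Z$ is reduced if and only if $Z$ satisfies $R_0$, i.e.\ if and only if $Z$ is regular at the generic point of each irreducible component. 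The Jacobian criterion in $V$ identifies regularity of $Z$ at a point $y$ with $df_y$ having rank $d$, and since the locus where $df$ has rank $d$ is open, this is equivalent to the rank condition of (1) being realized on each component. Finally $Z$ is reduced exactly when $\I(f_1,\dots,f_d)=\I(Y)$, i.e.\ when $Y$ is $G$-reduced.

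The main obstacle, or rather the one step that deserves care, is the complete intersection property used in (2). It rests on interpreting ``$Y$ has codimension $d$'' as pure codimension $d$ and on the fact that in a Cohen--Macaulay ambient variety a system of $d$ elements defining a pure-codimension-$d$ subscheme is automatically a regular sequence. Once $Z$ is known to be Cohen--Macaulay, Serre's criterion collapses to an $R_0$ check and the Jacobian calculation takes over; the $G$-saturation standing hypothesis enters only through the identification of $V(\I_G(Y))$ with $Y$ as sets.
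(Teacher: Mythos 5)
Your proposal is correct and follows exactly the route the paper intends: the paper's entire ``proof'' is the remark that (2) follows from Serre's criterion for reducedness and (1) from the Jacobian criterion for smoothness, and your write-up supplies precisely the missing details (generic smoothness of the reduced scheme $Y$ for (1); the complete-intersection/Cohen--Macaulay argument giving $S_1$, so that reducedness of $V(f_1,\dots,f_d)$ reduces to the $R_0$ rank check, for (2)). No gaps.
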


\begin{example}\label{ex:so3}
Let $G=\SO_3(\C)$ acting as usual on $V=2\C^3$.  Then the invariants are generated by inner products $f_{ij}$, $1\leq i\leq j\leq 2$. Each copy of $\C^3$ has a weight basis $\{v_2,v_0,v_{-2}\}$ relative to the action of the maximal torus $T=\C^*$ where $v_j$ has weight $j$. The null cone $Y:=\NN(V)$ is the $G$-orbit of all the vectors $v=(\alpha v_2,\beta v_2)$ for $\alpha$, $\beta\in\C$. But one easily calculates that the rank of $(f_{11},f_{22},f_{12})\colon V\to\C^3$ at $v$ is at most 2 while $Y$ has codimension 3.  Thus the null cone   is not $G$-reduced. 
\end{example}

\section{The case where $Y_\pr$ or $Y_\apr$ is dense in $Y$} \label{sec:stable}
Throughout this section we assume that $V$ is a \emph{stable} representation of $G$, i.e., there is a nonempty open subset of closed orbits. This is always the case when $(V,G)=(W_\C,K_\C)$ is a complexification (\cite{LunaClosed} or \cite[Cor.\ 5.9]{SchLifting}). Let $Gv$ be a closed orbit.   We say that \emph{$Gv$ is principal\/} if the slice representation $(N_v,G_v)$ is a trivial representation and that  \emph{$Gv$ is almost principal\/} if $G_v\to\GL(N_v)$ has finite image. We denote the principal (resp.\ almost principal) points of $V$ by $V_\pr$ (resp.\ $V_\apr$). If $Y\subset V$ is $G$-stable, we set 
  $Y_\pr=Y\cap V_\pr$ and $Y_\apr=Y\cap V_\apr$.  Both $Y_\apr$ and $Y_\pr$ are open in $Y$. In general, the fiber of $\pi$ through a closed orbit $Gv\subset V$ is $G\times^{G_v}\NN(N_v)$ (the $G$-fiber bundle with fiber $\NN(N_v)$ associated to the $G_v$-principal bundle $G\to G/G_v$). Thus the fiber is set-theoretically the orbit if and only $\NN(N_v)$ is a point. This  happens if and only if the image $G_v\to\GL(N_v)$ is finite, i.e, $v\in V_\apr$. Hence $Y_\apr$ is always $G$-saturated. Similarly, the fiber is scheme-theoretically the orbit   if and only if $\NN(N_v)$ is schematically a point which is equivalent to $G_v$ acting trivially on $N_v$, i.e., we have $v\in V_\pr$. Hence $Y_\pr$ is always $G$-reduced. To sum up we have

\begin{proposition}\label{prop:Greduced} Let $Gv$ be a closed orbit and let $Y\subset V$ be a $G$-stable algebraic set.
\begin{enumerate}
\item If $Y=Y_\apr$, then $Y$ is $G$-saturated. In particular,  $Gv$ is $G$-saturated if and only if it is almost principal.
\item If $Y=Y_\pr$, then $Y$ is $G$-reduced. In particular,  $Gv$ is $G$-reduced if and only if it is  principal.
\item The fiber $\pi\inv(\pi(v))$ is $G$-reduced if and only if the slice representation $(N_v,G_v)$ is coreduced.
\end{enumerate}
 \end{proposition}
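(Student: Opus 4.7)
The plan is to reduce everything to a single structural fact from Luna's slice theorem: for a closed orbit $Gv$, the scheme-theoretic fiber $\pi\inv(\pi(v))$ is \'etale-locally $G\times^{G_v}\NN(N_v)$, where $\NN(N_v)$ is cut out in the slice by the ideal $\I_{G_v}(\{0\})$ generated by $G_v$-invariants vanishing at the origin. Stability of $V$ also ensures that every point of $Y$ has a unique closed orbit in its $G$-orbit closure, and since $Y$ is Zariski closed, that closed orbit lies in $Y$ as well.

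For (1), pick $y\in Y$ and a closed orbit $Gy_0\subset\overline{Gy}$, so $y_0\in Y=Y_\apr$. The almost-principal hypothesis forces the image of $G_{y_0}\to\GL(N_{y_0})$ to be finite, so $\NN(N_{y_0})=\{0\}$ set-theoretically, and consequently $\pi\inv(\pi(y))=\pi\inv(\pi(y_0))=Gy_0\subset Y$. Hence $Y$ is $G$-saturated. For the orbit version: if $Gv=\pi\inv(\pi(v))$ then $Gv$ equals a fiber and is therefore closed, and the same argument shows $v$ is almost principal; the converse is the case $Y=Gv$ of what was just proved.

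For (2), part (1) applied to $Y=Y_\pr\subset Y_\apr$ already yields $G$-saturation, so only radicality of $\I_G(Y)$ remains. Triviality of every slice representation makes $\I_{G_v}(\{0\})\subset\C[N_v]$ the full maximal ideal at $0$, so $\NN(N_v)=\{0\}$ scheme-theoretically. By Luna, an \'etale $G$-neighborhood of each closed orbit $Gv\subset Y$ takes the form $G\times^{G_v}N_v\simeq(G/G_v)\times N_v$ with $\pi$ being projection to $N_v$, and $Y$ pulls back there to $(G/G_v)\times Y_0$ for $Y_0=Y\cap N_v$. The trivial $G_v$-action makes $G_v$-invariance automatic, so in this chart both $\I(Y)$ and $\I_G(Y)$ are generated by $\I(Y_0)\subset\C[N_v]$; faithfully flat descent along the \'etale slice cover then gives $\I_G(Y)=\I(Y)$ globally. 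The single-orbit statement is again the case $Y=Gv$.

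For (3), the same slice picture shows $\pi\inv(\pi(v))$ is \'etale-locally $G\times^{G_v}\NN(N_v)$, so reduced iff $\NN(N_v)$ is reduced, i.e.\ iff $(N_v,G_v)$ is coreduced. It remains to check that ``$G$-reduced'' for the fiber coincides with scheme-theoretic reducedness: any $G$-invariant vanishing on $\pi\inv(\pi(v))$ must vanish at $\pi(v)$, so $\I_G(\pi\inv(\pi(v)))=\mathfrak m_{\pi(v)}\cdot\C[V]$, which is exactly the scheme-theoretic fiber ideal. The main obstacle will be the global gluing step in (2): the \'etale-local computation immediately gives equality of ideals on each Luna chart, but concluding $\I_G(Y)=\I(Y)$ as ideals in $\C[V]$ requires invoking faithfully flat descent cleanly along the \'etale slice morphism.
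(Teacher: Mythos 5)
Your proof is correct and follows essentially the same route as the paper, which derives all three parts from the Luna-slice description of the fiber through a closed orbit as $G\times^{G_v}\NN(N_v)$ together with the observation that $\NN(N_v)$ is a point set-theoretically (resp.\ scheme-theoretically) exactly when $Gv$ is almost principal (resp.\ principal). The only difference is that you spell out the \'etale-descent step in (2) and the identification $\I_G(\pi\inv(\pi(v)))=\mathfrak m_{\pi(v)}\cdot\C[V]$ in (3), both of which the paper leaves implicit.
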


\begin{corollary}\label{cor:finiteisotropygroups}
If the isotropy groups of $G$ acting on $Y$ are all finite, then $Y$ is $G$-saturated and if $G$ acts freely on $Y$, then $Y$ is $G$-reduced.
\end{corollary}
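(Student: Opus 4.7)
The plan is to deduce the corollary directly from Proposition \ref{prop:Greduced} by showing that the hypothesis on isotropy groups forces $Y=Y_\apr$ (respectively $Y=Y_\pr$).

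First I would handle the saturation claim. Assume every isotropy group $G_y$ for $y\in Y$ is finite. Since we are working over $\C$ in characteristic zero, finite groups are reductive, so by the Matsushima--Luna criterion (reductive isotropy is equivalent to closed orbit) every orbit $Gy$ with $y\in Y$ is closed. Hence the slice representation $(N_y,G_y)$ is defined at every $y\in Y$, and its image in $\GL(N_y)$ is a quotient of the finite group $G_y$, hence finite. By definition this means $y\in V_\apr$, so $Y\subseteq V_\apr$, i.e.\ $Y=Y_\apr$. Proposition \ref{prop:Greduced}(1) then yields that $Y$ is $G$-saturated.

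Next I would handle the reducedness claim by the same pattern. If $G$ acts freely on $Y$, then $G_y=\{e\}$ for every $y\in Y$; the trivial group is certainly reductive, so again each orbit $Gy$ is closed, and the slice representation is the trivial representation of $\{e\}$ on $N_y$. Hence $y\in V_\pr$, so $Y=Y_\pr$, and Proposition \ref{prop:Greduced}(2) gives that $Y$ is $G$-reduced.

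There is no real obstacle here; the corollary is essentially a translation of the hypothesis into the language of the proposition. The one point that must not be skipped is invoking Matsushima to pass from ``finite isotropy'' to ``closed orbit,'' since the definitions of $V_\pr$ and $V_\apr$ in this section apply only to closed orbits; once that is in hand, both statements are immediate consequences of Proposition \ref{prop:Greduced}.
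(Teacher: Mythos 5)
Your overall strategy is the right one and is exactly what the paper intends: show that the hypothesis forces $Y=Y_\apr$ (resp.\ $Y=Y_\pr$) and then quote Proposition \ref{prop:Greduced}. However, the step you flag as the crucial one is justified incorrectly. There is no ``Matsushima--Luna criterion'' asserting that reductive (in particular finite) isotropy implies a closed orbit; only the converse holds (a closed orbit in an affine variety is affine, so its isotropy group is reductive by Matsushima). The implication you use is simply false: for $G=\C^*$ acting on $\C^2$ by $t\cdot(x,y)=(tx,t\inv y)$, the orbit of $(1,0)$ has \emph{trivial} isotropy group but is not closed. So as written, the passage from ``finite isotropy'' to ``closed orbit'' does not go through.

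The gap is repairable, and the repair is where the hypothesis on \emph{all} of $Y$ (not just on one orbit) is actually used. Since every $y\in Y$ has finite isotropy group, every orbit in $Y$ has dimension $\dim G$. Now fix $v\in Y$; because $Y$ is Zariski closed and $G$-stable, $\overline{Gv}\subset Y$, and $\overline{Gv}\setminus Gv$ is a $G$-stable set all of whose orbits have dimension strictly less than $\dim Gv=\dim G$. As these orbits would also lie in $Y$, the boundary must be empty, i.e.\ $Gv$ is closed. (Note that in your counterexample-free world this is invisible, but in the $\C^*$ example above the boundary point is the origin, which has infinite isotropy group and would be forced into $Y$.) With closedness of all orbits established, the rest of your argument is correct: the slice representation at $y$ has image a quotient of the finite group $G_y$ in $\GL(N_y)$, so $y\in V_\apr$, and $y\in V_\pr$ when $G_y$ is trivial; Proposition \ref{prop:Greduced}(1) and (2) then give the two conclusions.
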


	Of course, it is possible that  $Y$ is $G$-saturated (resp.\ $G$-reduced) even if $Y_\apr$ (resp.\ $Y_\pr$) is empty.  But in the case of a complexification $Y=X_\C$ it is  necessary for $G$-saturation (resp.\ $G$-reducedness) that $Y_\apr$ (resp.\ $Y_\pr$) is dense in $Y$ (see \S \ref{sec:real}). We consider the case that $Y_\apr$ or $Y_\pr$ is not dense in $Y$ in the next section.
 
 Unfortunately, we do not have the analogues of Proposition \ref{prop:Greduced}(1) and (2) for $X$. See Example \ref{ex:bad} below.

 	 Recall that $V$ is \emph{cofree\/} if $\C[V]$ is a free $\C[V]^G$-module. Equivalently, $\pi\colon V\to\quot VG$ is flat, or $\C[V]^G$ is a regular ring and the codimension of $\NN(V)$ is $\dim\C[V]^G$ \cite[Proposition 17.29]{SchLifting}. 
	 
	We owe the following lemma to the referee.
	
\begin{lemma}\label{lem:referee} Let $V$ be a cofree $G$-module and let $U\subset\quot VG$ be locally closed.  
\begin{enumerate}
\item We have $\overline{\pi\inv(U)}=\pi\inv(\overline {U})$.
\item If $\pi\inv(U)$ is reduced, then so is $\pi\inv(\overline{U})$.
\end{enumerate}
\end{lemma}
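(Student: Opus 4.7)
The plan is to use flatness of $\pi$: since $V$ is cofree, $R:=\C[V]$ is a free (hence flat) $A:=\C[V]^G$-module, so $\pi\colon V\to\quot VG$ is a flat morphism of finite type between Noetherian affine varieties. Part (1) then follows quickly. The inclusion $\overline{\pi\inv(U)}\subseteq\pi\inv(\overline U)$ is just continuity of $\pi$, and for the reverse inclusion I would invoke the classical theorem that a flat morphism of finite type between Noetherian schemes is open. Given $x\in\pi\inv(\overline U)$ and any open neighborhood $W$ of $x$, openness makes $\pi(W)$ an open neighborhood of $\pi(x)\in\overline U$, so $\pi(W)\cap U\neq\emptyset$ and hence $W\cap\pi\inv(U)\neq\emptyset$, placing $x\in\overline{\pi\inv(U)}$.

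For (2), let $I\subseteq A$ be the radical ideal defining $\overline U$, so that $\pi\inv(\overline U)$ has coordinate ring $R/IR$ and $\pi\inv(U)$ is its open subscheme lying over $U\subseteq\overline U$. My plan is to show that $\pi\inv(U)$ is schematically dense in $\pi\inv(\overline U)$ --- equivalently, that every associated prime of $R/IR$ lies in $\pi\inv(U)$. Granted this, any nilpotent $f\in R/IR$ restricts to a nilpotent section on the reduced scheme $\pi\inv(U)$ and hence vanishes there; but a section of $R/IR$ that vanishes on an open subscheme containing all associated primes must itself be zero, so $f=0$ and $R/IR$ is reduced.

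The schematic density claim is the main technical point, and the natural tool is flat base change for associated primes. Since $A\to R$ is flat, one has $\operatorname{Ass}_R(R/IR)=\bigcup_{\mathfrak p\in\operatorname{Ass}_A(A/I)}\operatorname{Ass}_R(R/\mathfrak p R)$, and flatness further forces each $\mathfrak q\in\operatorname{Ass}_R(R/\mathfrak p R)$ to satisfy $\mathfrak q\cap A=\mathfrak p$. Because $A/I$ is reduced, $\operatorname{Ass}_A(A/I)$ is precisely the set of minimal primes over $I$, i.e.\ the generic points of the irreducible components of $\overline U$; these lie in $U$ since $U$ is open and dense in $\overline U$. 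So every associated prime of $R/IR$ contracts into $U$, as required. The trickiest step to verify carefully is this associated-prime bookkeeping; everything else is formal once it is in place.
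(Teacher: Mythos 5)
Your proof is correct, and it differs from the paper's in both parts, more substantially in (2). For (1) you and the paper both ultimately exploit that the flat, finite-type morphism $\pi$ is open, but your argument is the direct one: any $x\in\pi\inv(\overline U)$ lies in $\overline{\pi\inv(U)}$ because every neighborhood of $x$ has open image meeting the dense set $U$. The paper instead first notes that $\pi$ carries the closed $G$-stable set $Z=\overline{\pi\inv(U)}$ onto the closed set $\overline U$ and then shows the open set $S=\pi\inv(\overline U)\setminus Z$ has open image disjoint from $U$, hence is empty; your version avoids the GIT input (closedness of images of closed $G$-stable sets) entirely. For (2) the routes genuinely diverge: the paper reduces to the case $U=\overline U_f$ and uses flatness of $\C[\pi\inv(\overline U)]=R/IR$ over $\C[\overline U]=A/I$ to conclude that localization at $f$ stays injective, embedding $R/IR$ into the reduced ring $\C[\pi\inv(U)]$; you instead invoke the flat base-change theorem for associated primes (Matsumura, Thm.\ 23.2) to get $\operatorname{Ass}_R(R/IR)=\bigcup_{\mathfrak p\in\operatorname{Ass}_A(A/I)}\operatorname{Ass}_R(R/\mathfrak p R)$, note that each such $\mathfrak q$ contracts to $\mathfrak p$ because $R/\mathfrak p R$ is flat, hence torsion-free, over the domain $A/\mathfrak p$, and that $\operatorname{Ass}_A(A/I)$ consists only of the minimal primes (as $A/I$ is reduced), whose corresponding generic points lie in the open dense subset $U$ of $\overline U$. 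Thus $\pi\inv(U)$ is schematically dense in $\pi\inv(\overline U)$ and a nilpotent of $R/IR$, vanishing on $\pi\inv(U)$, must vanish. Your argument uses heavier commutative algebra but sidesteps the paper's slightly delicate reduction to a principal open set; both are sound, and your associated-prime bookkeeping is accurate.
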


\begin{proof}
For (1) set $Z:=\overline{\pi\inv(U)}$. Then $\pi(Z)$ is closed \cite[II.3.2]{KraftBook}, hence $\pi(Z)=\overline U$. Since $\pi$ is flat, so is $\pi\inv(\overline{U})\to\overline{U}$. Set $S:= \pi\inv(\overline{U})\setminus Z$. Then $S$ is open, hence $\pi(S)$ is open in $\overline{U}$ (by flatness). By construction, $\pi(S)$ does not meet $U$, hence we must have $S=\emptyset$, establishing (1).

For (2) we can assume that $U=\overline{U}_f=\{u\in \overline{U}\mid f(u)\neq 0\}$ for $f\in\C[\overline{U}]$. Set $Z:=\pi\inv(\overline{U})$, the schematic inverse  image of $\overline{U}$. Since $\C[\overline{U}]\to\C[U]=\C[\overline{U}]_f$ is injective and $\C[Z]$ is flat over $\C[\overline{U}]$, it follows that $\C[Z]\to\C[Z]_f=\C[\pi\inv(U)]$ is also injective. Since the latter ring is reduced, so is $\C[Z]$ and we have established (2).
\end{proof}

\begin{corollary}\label{cor:cofreealmost}
Suppose that $(V,G)$ is cofree and that $Y\subset V$ is a $G$-stable algebraic set such that $Y_{\apr}$ is dense in $Y$. Then $Y$ is $G$-saturated.
\end{corollary}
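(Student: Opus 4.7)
The plan is to apply Lemma \ref{lem:referee}(1) to the set $U := \pi(Y_\apr) \subset \quot VG$. By Proposition \ref{prop:Greduced}(1), $Y_\apr$ is itself $G$-saturated, so $\pi\inv(U) = Y_\apr$. If I can verify that $U$ is locally closed in $\quot VG$ and that $\overline U = \pi(Y)$, then the lemma yields
\[
Y \;=\; \overline{Y_\apr} \;=\; \overline{\pi\inv(U)} \;=\; \pi\inv(\overline U) \;=\; \pi\inv(\pi(Y)),
\]
which is precisely $G$-saturation of $Y$.

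To check that $U$ is locally closed, I would write $Y_\apr = Y \cap W$ for some $G$-saturated Zariski open $W \subset V$ (this is possible because $Y_\apr$ is $G$-saturated and open in $Y$). Cofreeness of $V$ makes $\pi$ flat, and a flat morphism of finite type between Noetherian varieties is open, so $\pi(W)$ is open in $\quot VG$. Because $W$ is $G$-saturated, $\pi(Y_\apr) = \pi(W \cap Y) = \pi(W) \cap \pi(Y)$, and $\pi(Y)$ is closed by \cite[II.3.2]{KraftBook}; hence $U$ is locally closed. For $\overline U = \pi(Y)$: on one hand $U \subset \pi(Y)$ forces $\overline U \subset \pi(Y)$; on the other hand, density of $Y_\apr$ in $Y$ and continuity of $\pi$ give $\pi(Y) = \pi(\overline{Y_\apr}) \subset \overline{\pi(Y_\apr)} = \overline U$.

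The substantive content is packaged in Lemma \ref{lem:referee}; everything else is bookkeeping about saturation. The one step with any potential for friction is remembering that cofreeness gives flatness of $\pi$, and therefore that the image $\pi(W)$ of a $G$-saturated open is honestly open (not merely constructible), which is what makes $U$ locally closed rather than just a constructible subset of $\pi(Y)$.
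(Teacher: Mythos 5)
Your argument is correct and is exactly the intended application of Lemma \ref{lem:referee}(1): with $U=\pi(Y_\apr)=\pi(Y)\cap\pi(V_\apr)$ locally closed and $\pi\inv(U)=Y_\apr$ (by saturation of $V_\apr$), the lemma gives $Y=\overline{Y_\apr}=\pi\inv(\overline U)=\pi\inv(\pi(Y))$. The only cosmetic remark is that the saturated open set you need is simply $W=V_\apr$ itself, and its image is open even without invoking flatness, since the complement $V\setminus V_\apr$ is closed, $G$-stable and saturated, so its (closed) image is the complement of $\pi(V_\apr)$.
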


\begin{example}\label{ex:noncofree}
Let $(V,G)=(4\C^2,\SL_2)$ and let $Y=2\C^2\times\{0\}$. Then $Y_\pr=Y_\apr$ is dense in $Y$ (it is the set of linearly independent vectors in $Y$) but $Y$ is not $G$-saturated since it does not contain the null cone.  The $G$-module $V$ is not cofree, so we don't contradict Corollary \ref{cor:cofreealmost}. Note that this example is the complexification    of the case where $X=\C^2\times\{0\}\subset W:=\C^2\oplus\C^2$ and $K=\SU(2,\C)$. Thus $X_\pr=X_\apr$ is dense in $X$ but $X$ is not almost $K$-reduced. (We use Theorem \ref{thm:complexif}.)\ This   shows that cofreeness is also necessary in  Theorem \ref{thm:mainreal1}.
\end{example}

\begin{theorem}\label{thm:codimd}
Suppose that $Y\subset V$ is $G$-stable such that 
\begin{enumerate}
\item $Y_\apr$ is dense in $Y$.
\item $\quot YG\subset \quot VG$ is the zero set of $f_1,\dots,f_k$ where the minimal codimension  of a non almost principal stratum of $V$ which intersects $Y$ is at least $k+1$.
\end{enumerate}
 Then $Y$ is $G$-saturated.
\end{theorem}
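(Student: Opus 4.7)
My plan is to show that the saturation $Z := \pi^{-1}(\pi(Y))$ equals $Y$. By hypothesis (2), as a set $\pi(Y)$ is the zero locus of $f_1,\dots,f_k$ in $\quot VG$, so $Z$ is the zero locus of $f_1\circ\pi,\dots,f_k\circ\pi$ in $V$. Krull's principal ideal theorem will then yield $\codim_V Z_1 \le k$ for every irreducible component $Z_1$ of $Z$.

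The first step is to show that $Z\cap V_\apr\subseteq Y$. Since $V_\apr$ is $G$-saturated by Proposition \ref{prop:Greduced}(1) and the fiber of $\pi$ through any point of $V_\apr$ is a single orbit, a point $z\in Z\cap V_\apr$ satisfies $\pi(z)=\pi(y)$ for some $y\in Y$, and that $y$ must also lie in $V_\apr$. Hence $y\in Y_\apr\subseteq Y$ and $z\in Gy\subseteq Y$.

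The next step is to assume for contradiction that $Z\neq Y$, and choose an irreducible component $Z_1\not\subseteq Y$. Then $Z_1\setminus Y$ is open and dense in $Z_1$, hence disjoint from $V_\apr$ by the previous step. Since $V_\apr$ is open, $V\setminus V_\apr$ is closed, and so $Z_1\subseteq V\setminus V_\apr$. The strata of $V$ are locally closed, $G$-saturated, and partition $V$, so there is a unique stratum $S$ with $Z_1\cap S$ dense in $Z_1$. Since each stratum lies entirely in $V_\apr$ or entirely in its complement, $S$ is non almost principal; also $\codim_V S\le \dim V-\dim Z_1\le k$. Picking $z_0\in Z_1\cap S$ and $y\in Y$ with $\pi(y)=\pi(z_0)$, saturation of $S$ forces $y\in S\cap Y$, so $S$ is a non almost principal stratum of $V$ that meets $Y$; hypothesis (2) then gives $\codim_V S\ge k+1$, a contradiction. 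Hence $Z=Y$, i.e.\ $Y$ is $G$-saturated.

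The key obstacle will be this last step — pinning down that any hypothetical ``extra'' irreducible component of the saturation $Z$ must sit in a single non almost principal stratum that still meets $Y$, so that the codimension hypothesis is available to produce the contradiction. Everything else reduces to the general facts that $V_\apr$ is open and $G$-saturated with singleton fibers, that strata are $G$-saturated, and that $Z$ is cut out by $k$ equations on $V$.
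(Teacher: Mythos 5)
Your proof is correct and follows essentially the same route as the paper's: compare $Y$ with $Z=\pi^{-1}(\pi(Y))$, bound the codimension of the components of $Z$ by $k$ via the $k$ defining equations, use that fibers over $V_{\operatorname{apr}}$ are single orbits to get $Z\cap V_{\operatorname{apr}}\subseteq Y$, and use the codimension hypothesis on non almost principal strata (which, being $\pi$-saturated, meet $Z$ iff they meet $Y$) to rule out any extra component. The paper phrases this directly (showing $\widetilde Y_{\operatorname{apr}}=Y_{\operatorname{apr}}$ is dense in $\widetilde Y$, hence $\widetilde Y\subseteq Y$) rather than by contradiction, but the content is the same.
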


\begin{proof}
Let $\widetilde Y$ denote $\pi\inv(\pi(Y))$. Then  each irreducible component of $\widetilde Y$ has codimension less than or equal to $k$. Let $S$ be a non almost principal stratum of $V$ which intersects $Y$. Then $S\cap \widetilde Y$ is nowhere dense in $\widetilde Y$. Thus $\widetilde Y_\apr$ is dense in $\widetilde Y$.   Now $\widetilde Y_\apr$ and $Y_\apr$ have the same image in $\quot YG$. Hence  $Y_\apr=\widetilde Y_\apr$ and $Y=\widetilde Y$ is saturated.
\end{proof}

\begin{example}
Let $(V,G)=(k\C^2,\SL_2)$, $k\geq 2$. The codimension of the null cone is $k-1$ and the subset $Y$ where the first copy of $\C^2$ is zero is not saturated, but corresponds to the subset of $\quot VG$ where the determinant invariants $\det_{12},\dots,\det_{1k}$ vanish (see Example \ref{ex:sln}). Thus the codimension condition  in Theorem \ref{thm:codimd}(2) is sharp.
\end{example}

Here is an example that is a complexification. 

 \begin{example}\label{ex:so2}
Let $(V,G)=(2\C^2,\SO_2(\C))$ and let $Y=\C^2\times\{0\}\cup \{0\}\times \C^2$. Then $Y_\apr$ is dense in $Y$ since any  point not in $\NN(V)$ is on a principal orbit and $\NN(V)$ is nowhere dense in $Y$. However, $Y$ is not  $G$-saturated since it does not contain $\NN(V)$.  Note that $\I(\quot YG)$ is generated by $\det$ (the determinant),     $f_{12}$ and $f_{11}f_{22}$ where the $f_{ij}$ are the inner product invariants. Since $\det^2=f_{11}f_{22}-f_{12}^2$, $\I(\quot YG)$ is  the radical of the ideal generated by $f_{12}$ and $f_{11}f_{22}$. The null cone has codimension 2. Again this shows that the codimension condition in Theorem \ref{thm:codimd} is sharp.
\end{example}

We now have the following corollary of Lemma \ref{lem:referee}
 
 \begin{corollary}\label{cor:cofreereduced}
Suppose that $(V,G)$ is cofree and that $Y\subset V$ is $G$-stable such that $Y_{\pr}$ is dense in $Y$. Then $Y$ is $G$-reduced.
\end{corollary}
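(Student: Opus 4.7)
The plan is to apply Lemma \ref{lem:referee} with the locally closed subset $U = \pi(Y_\pr) \subset \quot V G$. Since $V_\pr$ is open and $G$-saturated in $V$, its image $\pi(V_\pr)$ is open in $\quot V G$, and $U = \pi(Y) \cap \pi(V_\pr)$, hence locally closed. Because $Y_\pr$ is dense in $Y$ and $\pi$ is continuous, $\overline{U} = \pi(Y)$.

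The key preliminary step is to identify $\pi\inv(U)$ scheme-theoretically with $Y_\pr$. Set-theoretically they agree because every orbit through a principal point is closed and equals its $\pi$-fiber, so $\pi\inv(U) = Y \cap V_\pr = Y_\pr$. For the scheme structure, Proposition \ref{prop:Greduced}(2) applied fiberwise over $\pi(V_\pr)$ shows that each $\pi$-fiber over $U$ is reduced (each is a single orbit acting with trivial stabilizer), so $\pi\inv(U)$ and $Y_\pr$ define the same ideal in $\C[V_\pr]$. Since $Y$ is reduced, the open subscheme $Y_\pr = \pi\inv(U)$ is reduced as well.

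Now Lemma \ref{lem:referee}(2) gives that $\pi\inv(\overline{U}) = \pi\inv(\pi(Y))$ is reduced, while Lemma \ref{lem:referee}(1) gives $\overline{Y_\pr} = \overline{\pi\inv(U)} = \pi\inv(\pi(Y))$. Since $Y_\pr$ is dense in the closed set $Y$, we deduce $Y = \pi\inv(\pi(Y))$, i.e., $Y$ is $G$-saturated. Combined with reducedness of the scheme $\pi\inv(\pi(Y))$ cut out by $\I_G(Y)$, this yields $\I(Y) = \I_G(Y)$, so $Y$ is $G$-reduced. The main obstacle I anticipate is the scheme-theoretic identification $\pi\inv(U) = Y_\pr$, since one must feed a \emph{reduced} scheme into Lemma \ref{lem:referee}(2); once that is in hand, the rest is a formal combination of parts (1) and (2) of the referee's lemma.
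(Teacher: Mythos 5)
Your proof is correct and is essentially the argument the paper intends: the corollary is stated as an immediate consequence of Lemma \ref{lem:referee}, applied exactly as you do with $U=\pi(Y_\pr)$, using the fact from \S\ref{sec:stable} that $\pi\inv(U)=Y_\pr$ is $G$-reduced (reduced fibers, with flatness from cofreeness backing up the fiberwise step). Parts (1) and (2) of the lemma then give saturation and reducedness of $\pi\inv(\pi(Y))$ just as you conclude.
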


\begin{remark} For $Y$ to be $G$-reduced, it is not sufficient that every slice representation of $V$ is coreduced.  (This is the same as saying that every fiber of $\pi\colon V\to\quot VG$ is reduced.)\ Just consider Example \ref{ex:noncofree} again.  Here $Y_\pr$ is dense in $Y$ but $Y$ is not $G$-saturated, let alone $G$-reduced.
\end{remark}

\begin{theorem}\label{thm:reduced}
Let $V$ be a $G$-module and let   $Y\subset V$ be $G$-saturated such that $Y_\pr$ is dense in $Y$. Suppose that $\pi(Y)\subset\quot VG$ is the zero set  of $f_1,\dots,f_k$ where the codimension of $Y$ is $k$. Then $Y$ is $G$-reduced.
\end{theorem}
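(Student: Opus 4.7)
To prove Theorem~\ref{thm:reduced}, I would interpret the hypothesis ideal-theoretically, matching the formulation in Theorem~\ref{thm:complex}(2): $\I(\pi(Y)) = (f_1,\dots,f_k)$ inside $\C[V]^G = \C[\quot VG]$. Since $Y$ is $G$-saturated, the identification $\C[V]^G\cong\C[\quot VG]$ carries $\I(Y)^G$ onto $\I(\pi(Y))$, so $\I_G(Y) = (f_1,\dots,f_k)\cdot\C[V]$ as an ideal of $\C[V]$. The conclusion $\I(Y)=\I_G(Y)$ then amounts to the scheme $Z:=V(f_1,\dots,f_k)\subset V$ being reduced (equivalently, equal to $Y$ scheme-theoretically).

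Next I observe that $Z$ is a complete intersection of pure codimension $k$, hence Cohen--Macaulay. It is cut out by $k$ equations, so each of its components has codimension at most $k$; and $G$-saturation combined with the hypothesis forces $Z_{\mathrm{red}} = \pi^{-1}(\pi(Y)) = Y$, which has pure codimension $k$. By Serre's $R_0 + S_1$ criterion for reducedness, it suffices to prove $Z$ is generically reduced.

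For this I restrict to the open $G$-stable set $V_\pr$. Since $Y_\pr$ is dense in $Y = Z_{\mathrm{red}}$, it meets every irreducible component of $Z$, so it is enough to show $Z \cap V_\pr$ is reduced. The Luna slice theorem at any $v \in V_\pr$ provides an \'etale-local model $V \simeq G/G_v \times N_v$ in which $G_v$ acts trivially on $N_v$; consequently the principal Luna stratum $\pi(V_\pr)$ is a smooth open subset of $\quot VG$, and the restricted quotient $\pi|_{V_\pr}\colon V_\pr \to \pi(V_\pr)$ is a smooth morphism. The radical ideal $\I(\pi(Y))=(f_1,\dots,f_k)$ restricts to a radical ideal on $\pi(V_\pr)$, so $Z \cap \pi(V_\pr)$ is a reduced subscheme of the smooth variety $\pi(V_\pr)$. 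Since reducedness descends under smooth morphisms, $Z \cap V_\pr = (\pi|_{V_\pr})^{-1}(Z \cap \pi(V_\pr))$ is reduced, which completes the proof.

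The main subtlety is the ideal-theoretic reading of the hypothesis: a purely set-theoretic reading would allow $(f_1,\dots,f_k)$ to be a proper sub-ideal of $\I(\pi(Y))$, in which case $Z$ could be a non-reduced Cohen--Macaulay thickening of $Y$ (and the rank condition of Proposition~\ref{prop:Serre}(2) could genuinely fail). Once that reading is fixed, the remaining work is routine: it is the descent of radicality along the smooth morphism $\pi|_{V_\pr}$, together with Cohen--Macaulayness of $Z$ to propagate generic reducedness to all of $Z$.
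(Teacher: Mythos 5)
Your proof is correct and follows essentially the same route as the paper's: both arguments combine Cohen--Macaulayness of the complete intersection $V(f_1,\dots,f_k)$ with generic reducedness along $Y_\pr$ via Serre's criterion (Proposition \ref{prop:Serre}), the only difference being that the paper gets the generic reducedness from Proposition \ref{prop:Greduced}(2) and phrases it as the Jacobian rank condition, while you re-derive it from the Luna slice description of $\pi$ over the principal stratum. Your observation that the hypothesis must be read ideal-theoretically ($\I(\pi(Y))=(f_1,\dots,f_k)$, not merely set-theoretic vanishing) is well taken and is exactly the form in which the hypothesis appears in Theorem \ref{thm:complex}(2) and in Proposition \ref{prop:Serre}(2), on which the paper's one-line proof relies.
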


\begin{proof}
The rank of the differential of  $f=(f_1,\dots,f_d)\colon V\to\C^d$  is maximal at a point of each irreducible component of $Y$ since $Y$ is reduced at all points of $Y_\pr$. Thus we can apply Serre's criterion (Proposition \ref{prop:Serre}).  
\end{proof}

\begin{example}\label{ex:Greduced}
Let $(V,G)=(4\C^2,\SL_2(\C))$ and let $Y$ be the zero set of two of the determinant invariants $\det_{ij}$. Then $Y_\pr$ is dense in $Y$ since the only non-principal stratum is $\NN(V)$ which has codimension 3 while $Y$ has codimension 2. By Theorem \ref{thm:reduced}, $Y$ is $G$-reduced.
\end{example}

\section{The case where $Y_\pr$ or $Y_\apr$ is not dense in $Y$}\label{sec:unstable} We can say something in the case that $Y_\apr$ or $Y_\pr$ is not dense in $Y$. We are certainly in this  case if $V$ is not stable, since then $V_\pr$ and $V_\apr$ are empty.  Let  $v\in Y$ such that  $Gv$ is closed. Let   $(N_v,G_v)$ be the slice representation and $S$ the corresponding stratum of $V$. We say that $(N_v,G_v)$ is a \emph{generic slice representation for $Y$\/} if $S\cap Y$ is dense in an irreducible component of $Y$. We also say that \emph{$S$ is  generic  for $Y$}. 

\begin{proposition}\label{prop:slicecoreduced}
Let  $(N_v,G_v)$ be a  generic slice representation of $Y$ corresponding to the stratum $S$ of $V$.   If $Y$ is  $G$-saturated, then $Y\cap S=\pi\inv(\pi(Y\cap S))$. If $Y$ is $G$-reduced, then $(N_v,G_v)$ is coreduced. 
\end{proposition}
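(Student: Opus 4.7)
The plan is as follows. For part (1), both $Y$ and the Luna stratum $S$ are $G$-saturated in $V$ (indeed $S=\pi^{-1}(\pi(S))$ by definition of a stratum). If $x\in\pi^{-1}(\pi(Y\cap S))$, then $\pi(x)\in\pi(Y)\cap\pi(S)$, forcing $x\in\pi^{-1}(\pi(Y))\cap\pi^{-1}(\pi(S))=Y\cap S$; the reverse inclusion is trivial.

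For part (2), assume $Y$ is $G$-reduced. Then $\I(Y)=\I(Y)^G\cdot\C[V]$ is radical, so $Y$ is reduced as a scheme and $\C[Y]=\C[V]\otimes_{\C[V]^G}\C[\pi(Y)]$. Set $H=G_v$, and choose $v$ to lie in the component $Y_0$ on which $Y\cap S$ is dense---this is harmless because any closed orbit in $Y_0\cap S$ has slice representation conjugate to $(N_v,H)$, so the desired conclusion is insensitive to the choice. Luna's \'etale slice theorem at $v$ provides a $G$-saturated \'etale neighborhood $U$ of $Gv$ in $V$ modeled on $G\times^H N_v$, with $\pi|_U$ corresponding to the natural quotient $G\times^H N_v\to\quot{N_v}{H}$. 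Under this model $Y\cap U$ corresponds to $G\times^H Y'$ for a closed $H$-stable $Y'\subset N_v$ containing $0$; writing $p\colon N_v\to\quot{N_v}{H}$ for the $H$-quotient, the $G$-reducedness of $Y$ transfers to $H$-reducedness of $Y'$, so $\C[Y']=\C[N_v]\otimes_{\C[N_v]^H}\C[p(Y')]$ and $Y'$ is reduced.

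Under Luna, the stratum $S$ pulls back to $G\times^H N_v^H$, so the density of $Y_0\cap S$ in $Y_0$ translates to density of $Y'_0\cap N_v^H$ in the component $Y'_0\subset Y'$ corresponding to $Y_0$; since $N_v^H$ is closed and $Y'_0$ is irreducible, $Y'_0\subset N_v^H$. Let $\eta$ be the generic point of $Y'_0$. Because $H$ acts trivially on $N_v^H$, the restriction $N_v^H\hookrightarrow\quot{N_v}{H}$ is a closed embedding, and $\eta$ is also the generic point of $p(Y'_0)\subset p(Y')$. The scheme-theoretic fiber of $p|_{Y'}$ at $\eta$ is a localization of the reduced ring $\C[Y']$, hence reduced; by $H$-reducedness it agrees with the fiber of $p$ at $\eta$, which via translation by the $H$-fixed point $\eta$ is isomorphic to $\NN(N_v)\otimes_\C\kappa(\eta)$. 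Reducedness descends through the faithfully flat extension $\C\to\kappa(\eta)$ to give reducedness of $\NN(N_v)$, so $(N_v,H)$ is coreduced.

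The main delicate step is transporting the data cleanly through Luna's slice theorem---verifying that $G$-reducedness of $Y$ transfers to $H$-reducedness of $Y'\subset N_v$, and that the generic-slice density forces $Y'_0\subset N_v^H$. Once these are in place, translation by the $H$-fixed generic point $\eta$ identifies the fiber $p^{-1}(\eta)$ with $\NN(N_v)\otimes_\C\kappa(\eta)$, and reducedness descends.
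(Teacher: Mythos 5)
Your part (1) is exactly what the paper does (it dismisses the step as obvious): since the strata of $V$ are by definition the inverse images of the Luna strata, both $Y$ and $S$ are $\pi$-saturated and the intersection argument closes. For part (2) your argument rests on the same engine as the paper's, namely Luna's slice theorem, but is packaged differently. The paper argues globally: over the Luna stratum $Z=\pi(S)$ the map $\pi^{-1}(Z)\to Z$ is a fiber bundle with fiber $G\times^{G_v}\NN(N_v)$; genericity places a nonempty open piece of the (reduced) scheme cut out by $\I_G(Y)$ inside this bundle, and reducedness of the total space of an \'etale-locally trivial bundle forces reducedness of the fiber. You instead descend to the slice and examine the fiber over a generic point; your two key observations --- that the fiber of a morphism over the generic point of a component of the base is a localization of the total coordinate ring, hence reduced, and that reducedness descends along the faithfully flat extension $\C\to\kappa(\eta)$ --- are both correct and play the role of the bundle triviality. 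The transfer of $G$-reducedness of $Y$ to $H$-reducedness of $Y'$, which you flag but do not verify, is indeed true (strong \'etaleness of the Luna map plus faithfully flat descent along $G\times B\to G\times^{H}B$).

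One step needs repair. With the paper's conventions the stratum $S$ is $\pi$-saturated, so under the Luna model it pulls back to $G\times^{H}p^{-1}(N_v^{H})$, not to $G\times^{H}N_v^{H}$; concretely $p^{-1}(N_v^{H})=N_v^{H}+\NN(N_v)$. Genericity therefore gives $p(Y'_0)\subset N_v^{H}$ but \emph{not} $Y'_0\subset N_v^{H}$: since $Y'$ is $p$-saturated, $Y'_0$ contains entire fibers isomorphic to $\NN(N_v)$ and lies in $N_v^{H}$ only when the slice representation is already almost principal. Hence your claim that the generic point of $Y'_0$ coincides with the generic point of $p(Y'_0)$ fails in general, and "translation by the $H$-fixed point $\eta$" would be meaningless for the generic point of $Y'_0$. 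The fix is simply to take $\eta$ to be the generic point of $p(Y'_0)$, a component of the base $p(Y')$, from the outset; then $\eta$ does lie in $N_v^{H}$, the fiber $p^{-1}(\eta)$ of the reduced scheme $Y'$ is a localization of $\C[Y']$, it is identified with $\NN(N_v)\otimes_\C\kappa(\eta)$ by translation, and your descent argument finishes the proof.
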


\begin{proof}   If $Y$ is $G$-saturated, then we obviously must have that $Y\cap S=\pi\inv(\pi(Y\cap S))$. Let  $Z$ denote $\pi(S)$. Then $\pi\inv (Z)\to Z$ is a fiber bundle with fiber $G\times^{G_v}\NN(N_v)$. If $Y$ is $G$-reduced, then the bundle is reduced, hence  $(N_v,G_v)$ is coreduced.
\end{proof}

Let $S$ be a stratum of $V$. We say that $Y$ is \emph{$S$-saturated\/} if $Y\cap S=\pi\inv(\pi(Y\cap S))$.  We say that $Y$ is \emph{$S$-reduced\/} if $Y$ is $S$-saturated and the slice representation $(N_v,G_v)$ associated to $S$ is coreduced. Corresponding to   Corollaries \ref{cor:cofreealmost} and \ref{cor:cofreereduced}  and Theorems \ref{thm:codimd} and   \ref{thm:reduced} we have the following result whose proof we leave to the reader.

\begin{theorem}
Let $Y\subset V$ be a $G$-stable algebraic set.
\begin{enumerate}
\item If $V$ is cofree and $Y$ is $S$-saturated for every stratum $S$ which is generic for $Y$, then $Y$ is $G$-saturated.
\item If $V$ is cofree and $Y$ is $S$-reduced for every stratum $S$ which is generic for $Y$, then $Y$ is $G$-reduced.
\item  Suppose that $Y$ is $S$-saturated for every every generic stratum $S$ of $Y$. Further assume that the minimal codimension  of  the strata of $V$ which intersect  $Y$ but are not generic for $Y$ is greater than $k$ and that  $\quot YG$ is the zero set of $f_1,\dots,f_k$.  Then $Y$ is $G$-saturated.
\item Suppose that $Y$ is $G$-saturated and that the ideal of $\pi(Y)\subset\quot VG$ is generated by $f_1,\dots,f_k$ where the codimension of $Y$ in $V$ is $k$. Also assume that $Y$ is $S$-reduced for every generic stratum $S$ of $Y$. Then $Y$ is $G$-reduced.
\end{enumerate}
\end{theorem}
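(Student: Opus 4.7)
The plan is to adapt the four earlier results (Corollaries \ref{cor:cofreealmost}, \ref{cor:cofreereduced} and Theorems \ref{thm:codimd}, \ref{thm:reduced}) by replacing the density hypothesis on $Y_\apr$ or $Y_\pr$ with a stratum-by-stratum decomposition. For each irreducible component $Y_j$ of $Y$, fix a generic stratum $S_j$ of $V$ with $Y_j \cap S_j$ dense in $Y_j$; such $S_j$ exists because $Y_j$ is the finite union of the intersections $Y_j \cap S$ over strata $S$, so some such intersection must be dense in the irreducible $Y_j$. Write $U_j := \pi(Y \cap S_j) \subset \pi(S_j)$.

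For (1), $S_j$-saturation gives $Y \cap S_j = \pi^{-1}(U_j)$. Cofreeness of $V$ lets us apply Lemma \ref{lem:referee}(1): $\overline{Y \cap S_j} = \pi^{-1}(\overline{U_j})$ is a $G$-saturated closed subset of $Y$ containing $Y_j$. Hence $Y = \bigcup_j \overline{Y \cap S_j}$ is a finite union of $G$-saturated sets, so $G$-saturated. For (2), granted (1), it remains to show the scheme $\pi^{-1}(\pi(Y))$ is reduced. Since $\pi$ is flat, scheme-theoretic pullback commutes with finite unions of closed subschemes, so $\pi^{-1}(\pi(Y)) = \bigcup_j \pi^{-1}(\overline{U_j})$ as schemes. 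By Lemma \ref{lem:referee}(2) it suffices that each $\pi^{-1}(U_j)$ is reduced. The generic points of $U_j$ lie in the Luna stratum $\pi(S_j)$, and at any such point $p$ the fiber $\pi^{-1}(p) = G \times^H \NN(N)$, where $(N, H)$ is the slice representation attached to $S_j$, is reduced by the coreducedness hypothesis. A flat morphism with reduced base whose fibers are reduced over the generic points of the base has reduced source, so $\pi^{-1}(U_j)$ is reduced. A finite scheme-theoretic union of reduced closed subschemes is reduced, so $\pi^{-1}(\pi(Y))$ is reduced; combined with $G$-saturation, $\I(Y) = \I_G(Y)$.

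For (3), set $\widetilde Y := \pi^{-1}(\pi(Y))$; its defining ideal is $(f_1 \circ \pi, \ldots, f_k \circ \pi)$, so every irreducible component of $\widetilde Y$ has codimension at most $k$ in $V$. A key observation is that any stratum $S$ meeting $\widetilde Y$ also meets $Y$, because the unique closed orbit in each fiber of $\pi$ lies in the $G$-stable closed set $Y$. Hence the codimension hypothesis applies to every non-generic stratum meeting $\widetilde Y$, and by dimension count such strata contribute only nowhere-dense pieces of $\widetilde Y$. For each generic stratum $S_j$, $S_j$-saturation gives $\widetilde Y \cap S_j = \pi^{-1}(\pi(Y) \cap \pi(S_j)) = \pi^{-1}(U_j) = Y \cap S_j \subset Y$. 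Taking closures, $\widetilde Y \subset Y$, so $\widetilde Y = Y$.

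For (4), invoke Proposition \ref{prop:Serre}(2) with $d = k$: we must exhibit, for each component $Y_j$, a point at which the Jacobian of $(f_1 \circ \pi, \ldots, f_k \circ \pi)$ has rank $k$. Choose a closed orbit $Gv_0 \subset Y_j \cap S_j$ and use Luna's slice theorem to model $(V, \pi)$ \'etale-locally at $v_0$ by $G \times^H N \to \quot NH$ with $H = G_{v_0}$ and $N = N_{v_0}$. Under this model, $S_j$-saturation of $Y$ becomes local $H$-saturation of the slice trace of $Y$ in $N$ at the origin, while coreducedness of $(N, H)$ means $\NN(N)$ is reduced. Together these force the scheme defined by $\I_G(Y)$ to be reduced at $v_0$; being a codimension-$k$ complete intersection, its Jacobian must have rank $k$ at $v_0$. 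The main technical obstacle lies in this final step, namely carrying the slice-level statements back through Luna's \'etale model to conclude reducedness of the global scheme at $v_0$; the remaining parts are relatively direct adaptations of the cofree and codimension arguments already developed.
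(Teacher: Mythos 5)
The paper gives no written proof of this theorem (it is explicitly left to the reader as an adaptation of Corollaries \ref{cor:cofreealmost}, \ref{cor:cofreereduced} and Theorems \ref{thm:codimd}, \ref{thm:reduced}), so the comparison is against that intended adaptation. Your parts (1)--(3) carry it out correctly and in the intended way: for (1) and (2) you reduce to Lemma \ref{lem:referee} via the decomposition $Y=\bigcup_j\overline{Y\cap S_j}$ (note that $U_j=\pi(Y\cap S_j)=\pi(Y)\cap\pi(S_j)$ is indeed locally closed, as the lemma requires, because $\pi(Y)$ is closed and $\pi(S_j)$ is a Luna stratum), and the facts you invoke --- flat pullback commutes with intersections of ideals, and a flat scheme over a reduced base with reduced fibers over the generic points is reduced --- are correct here (char.\ $0$, so reduced $=$ geometrically reduced). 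Part (3) is a faithful rerun of the proof of Theorem \ref{thm:codimd}, and the observation that a stratum meeting $\widetilde Y$ must meet $Y$ (via the closed orbit in each fiber) is exactly the point needed to apply the codimension hypothesis.

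Part (4), however, has a genuine gap, and it sits precisely at the step you flag as ``the main technical obstacle.'' Two problems. First, the final inference ``reduced at $v_0$, being a codimension-$k$ complete intersection, its Jacobian must have rank $k$ at $v_0$'' is false as stated: $V(xy)$ is reduced at the origin but the Jacobian of $xy$ vanishes there. Reducedness at $v_0$ only gives rank $k$ at a \emph{generic} point of each component through $v_0$, which is what Proposition \ref{prop:Serre} actually needs --- a repairable slip, but it signals that the local analysis is not being done at the right point. Second, and more seriously, the asserted implication ``local $H$-saturation of the slice trace $+$ coreducedness of $(N,H)$ $\Rightarrow$ the scheme $V(\I_G(Y))$ is reduced at $v_0$'' is not a routine verification: coreducedness only controls the single fiber $\pi_N^{-1}(0)=\NN(N)$, whereas in the Luna model $\I_G(Y)$ becomes the pullback of the full reduced ideal of an \emph{arbitrary} reduced subscheme $T\subset\quot NH$ through the image of $0$, and schematic preimages of reduced subschemes under $\pi_N$ need not be reduced in general. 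What makes the statement true is the conjunction with the complete-intersection hypothesis, which you never bring into the local argument. The workable route, parallel to the one-line proof of Theorem \ref{thm:reduced}, is: $Z:=V(f_1\circ\pi,\dots,f_k\circ\pi)$ has codimension $k$ (by $G$-saturation), hence is Cohen--Macaulay, hence is reduced if and only if it is reduced at the generic point $\eta_j$ of each component $Y_j$; then one shows $\O_{Z,\eta_j}$ is reduced by comparing it with the local ring at $\eta_j$ of the scheme-theoretic fiber of $\pi$ over $\xi_j:=\pi(\eta_j)$, which lies in the Luna stratum $\pi(S_j)$, over which $\pi$ restricts to a flat fiber bundle with fiber $G\times^{H}\NN(N)$ --- reduced exactly by the coreducedness in the definition of $S_j$-reduced. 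Even this comparison requires an argument (e.g.\ that $\I(\pi(Y))$ localizes at $\xi_j$ to the ideal of $\pi(Y_j)$, or a direct flatness argument over the stratum), and none of it appears in your write-up. As it stands, part (4) is a plan rather than a proof.
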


\section{The real case}\label{sec:real}

Let $W$ be a real $K$-module where $K$ is compact. Let $X\subset W$ be real algebraic and $K$-stable. Now $K$ is naturally a real algebraic group and the action on $W$ is real algebraic. Moreover, every orbit of $K$ in $W$ is a real algebraic set \cite{SchAlgebraicquotients}. 
Let $Y:=X_\C$ denote the complexification of $X$ inside $V:=W\otimes_\R \C$ and let $G$ denote the complexification $K_\C$ of $K$. Then $G$ is reductive and $V$ is a stable $G$-module (\cite{LunaClosed} or \cite[Cor.\ 5.9]{SchLifting}).  We say that a slice representation $(N_w,K_w)$ is a \emph{generic slice representation for $X$\/} if $w\in X$ and the corresponding stratum contains a nonempty open subset of $X$. Equivalently, the complexification of $(N_w,K_w)$ is generic for $Y$.

\begin{proposition} \label{prop:complexify}\label{prop:aprdense}
\begin{enumerate}
\item $X$ is almost $K$-reduced if and only if $Y$ is $G$-saturated.
\item $X$ is $K$-reduced if and only if $Y$ is $G$-reduced.
\item The set $X_\apr$ (resp.\ $X_\pr$) is   dense in $X$ if and only if the set $Y_\apr$ (resp.\ $Y_\pr$) is   dense in $Y$.
\item $X$ is almost $K$-reduced implies that $X_\apr$ is dense in $X$.
\item $X$ is $K$-reduced implies that $X_\pr$ is dense in $X$.
\end{enumerate}
\end{proposition}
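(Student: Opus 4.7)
The strategy is to translate statements about $X$ into statements about $Y=X_\C$ via the identity $\I(Y)=\I(X)\otimes_\R\C$ together with the Reynolds operator, which gives $J^K\otimes_\R\C=(J\otimes_\R\C)^G$ for any $K$-stable ideal $J\subset\R[W]$. Hence $\I_G(Y)=\I_K(X)\otimes_\R\C$, and since taking the radical commutes with the flat extension $\R\to\C$, also $\sqrt{\I_G(Y)}=\sqrt{\I_K(X)}\otimes_\R\C$. By faithful flatness, an equality of ideals in $\C[V]$ with real structure descends to $\R[W]$.

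Parts (1) and (2) then follow by observing that $Y$ is $G$-saturated iff $\sqrt{\I_G(Y)}=\I(Y)$ and $Y$ is $G$-reduced iff $\I_G(Y)=\I(Y)$; via the above translation these become $\sqrt{\I_K(X)}=\I(X)$ and $\I_K(X)=\I(X)$, i.e., $X$ almost $K$-reduced and $K$-reduced respectively. For (3), the correspondence of strata from \cite[\S 5]{SchLifting} gives $W_\apr=W\cap V_\apr$ and $W_\pr=W\cap V_\pr$, so $X_\apr=X\cap V_\apr$. Since $X$ is (complex-)Zariski dense in $Y$ and $V_\apr$ is open in $V$, a standard density argument yields $X_\apr$ Zariski dense in $Y_\apr$. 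Consequently the Zariski closure of $X_\apr$ in $V$ coincides with that of $Y_\apr$, and this closure equals $Y$ iff $X_\apr$ is dense in $X$, iff $Y_\apr$ is dense in $Y$; the same reasoning applies to $\pr$.

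For (4), $X$ almost $K$-reduced yields $Y$ $G$-saturated by (1), so it suffices to show that each complex irreducible component $Y_0$ of $Y$ meets $V_\apr$. Let $S$ be the Luna stratum of $V$ dense in $Y_0$, and suppose for contradiction that $(N_v,G_v)$ at $v\in Y_0\cap S$ is not almost principal, so $\dim\NN(N_v)>0$. Choose $v$ outside all other components $Y_i$ of $Y$; then $Gv\subset Y_0$ is the unique closed orbit in the fiber $\pi\inv(\pi(v))=G\times^{G_v}\NN(N_v)$, and every irreducible component of the fiber contains $Gv$. By $G$-saturation each such component lies in $Y$, and by the choice of $v$ each is forced into $Y_0$. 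Hence $(\pi|_{Y_0})\inv(\pi(v))=\pi\inv(\pi(v))$ has dimension $\dim Gv+\dim\NN(N_v)$ for generic $v$. On the other hand, orbits in $Y_0\cap S$ are closed of dimension $\dim Gv$ and $Y_0\cap S$ is open dense in $Y_0$, so the generic fiber of $\pi|_{Y_0}$ has dimension exactly $\dim Gv$. This forces $\dim\NN(N_v)=0$, a contradiction. Thus each generic slice is almost principal, $Y_\apr$ is dense in $Y$, and $X_\apr$ is dense in $X$ by (3).

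For (5), $X$ $K$-reduced implies $Y$ is $G$-reduced, hence $G$-saturated, so by (4) the generic slice $(N_v,G_v)$ of each component is almost principal: the image $F$ of $G_v$ in $\GL(N_v)$ is finite. Proposition \ref{prop:slicecoreduced} then gives that $(N_v,G_v)$ is coreduced, i.e., the scheme $\NN(N_v)$ is reduced. Since $F$ is finite, $\NN(N_v)=\{0\}$ set-theoretically; reducedness says that the ideal generated by the positive-degree $F$-invariants in $\C[N_v]$ equals the maximal ideal at $0$. But the linear part of that ideal is spanned by $F$-invariant linear forms, so $F$ must act trivially on $N_v$, placing $v$ in $V_\pr$. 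Hence $Y_\pr$ is dense in $Y$ and $X_\pr$ is dense in $X$ by (3). The main obstacle is the dimension argument in (4); the subtlety is ruling out that some component of $\pi\inv(\pi(v))$ escapes into a $Y_i$ with $i\neq 0$, which is handled by placing $v$ outside all intersections $Y_0\cap Y_i$.
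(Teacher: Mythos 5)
Parts (1)--(3) of your proposal are correct and essentially identical to the paper's argument: extend ideals along $\R\to\C$, use that invariants and radicals commute with this extension and that $G$-saturation of $Y$ means $\sqrt{\I_G(Y)}=\I(Y)$, and use the correspondence of strata together with density of $X$ in $Y$ for (3). The problem is in part (4), where there is a genuine gap. You pick ``the Luna stratum $S$ of $V$ dense in $Y_0$'' and then assert that ``orbits in $Y_0\cap S$ are closed of dimension $\dim Gv$,'' concluding that the generic fiber of $\pi|_{Y_0}$ has dimension $\dim Gv$. Nothing in your argument justifies that the generic point of $Y_0$ lies on a \emph{closed} orbit: with the paper's definition a stratum of $V$ is the full inverse image $\pi\inv(\Sigma)$ of a Luna stratum of $\quot VG$ and so contains non-closed orbits, while under the alternative reading (sets of closed orbits with conjugate isotropy) these sets do not cover $V$ and none of them need be dense in $Y_0$. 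The step must fail in general, because your proof of (4) uses only that $Y$ is $G$-saturated, and ``$G$-saturated $\Rightarrow Y_\apr$ dense'' is false for arbitrary complex $Y$ (the paper says so explicitly before Example \ref{ex:noncofree}): take $Y=\NN(V)$ in Example \ref{ex:so3}, which is a fiber of $\pi$, hence $G$-saturated, yet $Y_\apr=\emptyset$, the only closed orbit in $Y$ is $\{0\}$, and the generic fiber of $\pi|_{Y}$ is all of $Y$ rather than a single orbit.

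The missing ingredient is precisely the real structure, which is what the paper's one-line proof invokes: for $x\in W$ the orbit $Gx$ is closed with isotropy $(K_x)_\C$ (\cite{LunaClosed}), and $X$ is Zariski dense in $Y$, so the generic point of each component $Y_0$ \emph{does} lie on a closed orbit of dimension $\dim Kx$; this is what yields $\dim\pi(Y_0)\ge\dim Y_0-\dim Gx$ and hence that the generic fiber of $\pi|_{Y_0}$ has dimension at most $\dim Gx$. With that inserted, your count goes through: by saturation and the fact that every component of $G\times^{G_x}\NN(N_x)$ contains the closed orbit $Gx$ (the null cone being a cone), the whole fiber sits in $Y_0$ for generic $x$, so the generic fiber has dimension $\dim Gx+\dim\NN(N_x)$, forcing $\NN(N_x)$ to be a point --- a legitimate expansion of the paper's terse claim that $Gx$ is Zariski dense in its fiber. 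Part (5) as you argue it (via Proposition \ref{prop:slicecoreduced} and the observation that a finite group whose null cone is schematically $\{0\}$ has enough invariant linear forms to act trivially) is a correct alternative to the paper's scheme-theoretic phrasing, but it inherits the same gap since it relies on (4).
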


\begin{proof}
The ideal of $Y$ is $\I(X)\otimes_\R\C\subset \R[W]\otimes_\R\C=\C[V]$ and $\I_K(X)\otimes_\R\C=\I_G(Y)$. Thus $\I(Y)=\I_G(Y)$ if and only if $\I(X)=\I_K(X)$, and $\I(Y)=\sqrt{\I_G(Y)}$ if and only if $\I(X)=\sqrt{\I_K(X)}$. Hence we have (1) and (2). 
For (3), note that $X_\apr$ is   open in $X$ and that $Y_\apr$ is   open in $Y$. If a stratum $S$ of $W$ is   dense in an irreducible component of $X$, then the corresponding stratum $S_\C$ of $V$ is   dense in an irreducible component of $Y$. Thus if   $X_\apr$ is not   dense in $X$, then $Y_\apr$ is not   dense in $Y$. Clearly, if $X_\apr$ is dense in $X$, $Y_\apr\supset X_\apr$ is dense in $Y$.  The argument for $X_\pr$ and $Y_\pr$ is similar, hence we have (3).
Now suppose that $X$ is almost $K$-reduced. Then for $S$ a generic stratum of $X$ and $x\in S\cap X$, the  complexification  $Gx\simeq G/G_x$ of $Kx$ is Zariski dense in the fiber $G\times^{G_x}\NN(W_x\otimes_\R\C)$ where $G_x=(K_x)_\C$. Thus $\NN(W_x\otimes_\R\C)$ is a point, i.e., the stratum consists of almost principal orbits. Hence we have (4), and (5) is proved similarly.
\end{proof}

\begin{corollary}\label{cor:orbit}
Let $X=Kw$ be an orbit. Then $X$ is almost $K$-reduced if and only if $Kw$ is almost principal and $X$ is $K$-reduced if and only if $Kw$ is principal.
\end{corollary}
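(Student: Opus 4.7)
The plan is to reduce the corollary to the complex case already handled by Proposition \ref{prop:Greduced}, via the complexification machinery of Proposition \ref{prop:complexify}. So the first step is to identify the complexification $Y := (Kw)_\C$. I would argue that $Y = Gw$: the Zariski closure of the $K$-invariant set $Kw$ is again $K$-invariant (since $K$ acts by algebraic automorphisms of $V$), and any Zariski-closed, $K$-invariant subset of $V$ is automatically $G$-invariant (the $K$-action on $\C[V]$ extends uniquely to a $G$-action, and $G$-ideals are exactly the $K$-stable ones). Since $w\in W$ is a real point, the $G$-orbit $Gw$ is a closed orbit in $V$ (by \cite{LunaClosed} applied to the complexification, or equivalently because $Kw\subset Gw$ is a compact real form that meets every closed $G$-orbit in the fiber). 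Thus $Y$ is a closed $G$-invariant subset of $V$ containing $w$, so contains $Gw$; and $Gw$ is a closed $G$-invariant set containing $Kw$, so contains $Y$. Hence $Y = Gw$.

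Next I would invoke Proposition \ref{prop:complexify}: $X = Kw$ is almost $K$-reduced if and only if $Y = Gw$ is $G$-saturated, and $X$ is $K$-reduced if and only if $Y = Gw$ is $G$-reduced. Since $Gw$ is a closed orbit, Proposition \ref{prop:Greduced}(1) and (2) give exactly what is needed: $Gw$ is $G$-saturated if and only if $Gw$ is almost principal (as a $G$-orbit), and $Gw$ is $G$-reduced if and only if $Gw$ is principal.

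It then remains to match real principality of $Kw$ with complex principality of $Gw$. The isotropy complexifies, $G_w = (K_w)_\C$, and the $K_w$-complement $N_w$ to $T_w(Kw)$ in $W$ complexifies to a $G_w$-complement $N_w\otimes_\R\C$ to $T_w(Gw) = T_w(Kw)\otimes_\R\C$ inside $V$. Thus the complex slice representation is the complexification of the real one. Since $K_w$ is Zariski dense in $G_w$, the image $K_w\to\GL(N_w)$ is finite (resp.\ trivial) if and only if the image $G_w\to\GL(N_w\otimes_\R\C)$ is finite (resp.\ trivial): a Zariski-dense subset of $G_w$ has finite image exactly when $G_w$ does, and the kernel of the complex representation is the Zariski closure of the kernel of the real one. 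Consequently $Kw$ is almost principal (resp.\ principal) if and only if $Gw$ is.

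The only genuine point of care is the identification $Y = Gw$; everything else is a short chain of equivalences. The closedness of $Gw$ and the density argument $\overline{Kw}^{\mathrm{Zar}} = Gw$ are the essential geometric input, but both are standard consequences of $w$ being a real point for a compact real form.
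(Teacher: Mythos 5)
Your proposal is correct and follows essentially the same route as the paper: identify the complexification of $Kw$ as the closed orbit $Gw$, transfer (almost) $K$-reducedness to $G$-saturation/$G$-reducedness via Proposition \ref{prop:complexify}(1),(2), apply Proposition \ref{prop:Greduced}(1),(2) to the closed orbit, and match real and complex principality through the complexified slice representation (the stratum correspondence the paper cites from \cite{SchLifting}). The only cosmetic difference is that the paper could also get the forward implications directly from Proposition \ref{prop:complexify}(4),(5), but the content is the same.
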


 Unfortunately, it is not true that $X=X_\pr$ (or $X=X_\apr$) implies the same equality for $Y$.

 \begin{example}\label{ex:bad}
 Let $K=\SU_2(\C)$ and $W=2\C^2\oplus\R$ where $K$ acts as usual on the copies of $\C^2$ and trivially on $\R$. We consider $W$ to be $2\H\oplus\R$ where $\H$ denotes the quaternions. Then $K\simeq S^3$, the unit quaternions, and the action on $2\H$ is given by $k(p,q)=(kp,kq)$, $p$, $q\in\H$, $k\in S^3$. Let $p\mapsto \bar p$ denote the usual conjugation of quaternions. The invariants of $K$ acting on $2\H$ are generated by $(p,q)\mapsto (\bar p p,\bar q q,\bar q p)$ where the first two invariants lie in $\R$ and the last in $\H$.  Let $\alpha$ and $\beta$ denote the first two invariants and let $\gamma$ be the real part of $\bar qp$. Let $\delta$, $\epsilon$ and $\zeta$ be the invariants which are the $i$, $j$ and $k$ components of $\bar qp$, respectively. Then there are certainly points in $2\H$ where $\delta$, $\epsilon$ and $\zeta$ vanish and where $\alpha=\beta=\gamma$ is any positive real number. Let $x$ be a coordinate on the copy of $\R$ in $W$ and let $X$ be the subset of $W$ defined by $\delta=\epsilon=\zeta=0$, $\alpha=\beta=\gamma$ and $(\alpha-1)^2+x^2=1/2$. Then $\alpha$ never vanishes on $X$ which implies that the isotropy group at the corresponding point of $W$ is trivial, so we have that $X=X_\pr$. The quotient $X/K$ is a smooth curve, hence $X$ is smooth of dimension $4$. The complexification $Y$ of $X$ also has dimension four and contains some of the points  $(s,t,\pm\sqrt{-3/4})$ where $(s,t)$ lies in the null cone of $2\H\otimes_\R\C\simeq 4\C^2$ for the action of $K_\C\simeq\SL_2(\C)$. But this null cone has dimension 5. Hence $Y$ is not $G$-saturated, let alone $G$-reduced, and $Y\neq Y_\apr$. Moreover,  $X$ is neither $K$-reduced nor almost $K$-reduced.
 \end{example}

Now we   recover the theorems of the introduction. Theorem \ref{thm:mainreal2} is just Corollary \ref{cor:orbit}. Theorem \ref{thm:complexif} is a consequence of Proposition  \ref{prop:complexify}  and Theorem \ref{thm:complex} follows from 
 Theorems \ref{thm:codimd} and \ref{thm:reduced}.

\begin{proof}[Proof of  Theorem \ref{thm:mainreal1}] Suppose that $X$ is $K$-reduced. Then Proposition \ref{prop:complexify} shows that $X_\pr$ is dense in $X$. Conversely, if $(W,K)$ is cofree (equivalently, $(V,G)$ is cofree) and $X_\pr$ is dense in $X$, then $Y_\pr$ is dense in $Y$ by Proposition \ref{prop:aprdense} and $Y$ is $G$-reduced by Corollary \ref{cor:cofreereduced}. Hence $X$ is $K$-reduced. The proof in the almost $K$-reduced case is similar.
\end{proof}


\def\cprime{$'$}
\providecommand{\bysame}{\leavevmode\hbox to3em{\hrulefill}\thinspace}
\providecommand{\MR}{\relax\ifhmode\unskip\space\fi MR }
\providecommand{\MRhref}[2]{%
  \href{http://www.ams.org/mathscinet-getitem?mr=#1}{#2}
}
\providecommand{\href}[2]{#2}

\end{document}